\title{Spectral tail processes and max-stable approximations of multivariate 
regularly varying time series}
\author{
Anja Jan{\ss}en\footnote{University of Copenhagen, Department of Mathematics, 
Universitetsparken 5, 2100 Copenhagen, Denmark;
email: anja@math.ku.dk}}
\newcommand{\R}{\mathbb{R}}
\newcommand{\N}{\mathbb{N}}
\newcommand{\Z}{\mathbb{Z}}
\newtheorem{theorem}{Theorem}[section]
\newtheorem{corollary}[theorem]{Corollary}
\newtheorem{remark}[theorem]{Remark}
\newtheorem{proposition}[theorem]{Proposition}
\newtheorem{lemma}[theorem]{Lemma}
\newtheorem*{property}[theorem]{Property}
\newtheorem{definition}[theorem]{Definition}
\numberwithin{equation}{section}
\newtheorem{example}[theorem]{Example}
\def\Min(#1,#2){#1\wedge #2}
\def\Max(#1,#2){#1\vee #2}
\begin{document}

\maketitle

\begin{abstract}
A regularly varying time series as introduced in \cite{BaSe09} is a 
(multivariate) time series such that all finite dimensional distributions are 
multivariate regularly varying. The extremal behavior of such a process can 
then be described by the index of regular variation and the so-called spectral 
tail process, which is the limiting distribution of the rescaled process, given 
an extreme event at time 0. As shown in \cite{BaSe09}, the stationarity of the 
underlying time series implies a certain structure of the spectral tail 
process, informally known as the ``time change formula''. In this article, we 
show that on the other hand, every process which satisfies this property is in 
fact the spectral tail process of an underlying stationary max-stable process. 
The spectral tail process and the corresponding max-stable process then provide 
two complementary views on the extremal behavior of a multivariate regularly 
varying stationary time series. 
\end{abstract}
{\footnotesize \noindent\it Keywords and phrases: max-stable processes; 
regularly varying time series; spectral tail process; stationary processes} \\
{\footnotesize {\it AMS 2010 Classification:} 60G70 (60G10; 60G55).}

\section{Introduction}\label{Sec:Intro}
The concept of regular variation has become a standard tool for the extremal 
analysis of multivariate time series. Roughly speaking, the assumption of 
regular variation of a time series implies that extremal episodes from this 
time series can be modelled as the product of a heavy-tailed radial component 
which determines the magnitude of the extremal event and an independent and 
normalized random vector which determines the dependence structure between the 
different components of one observation and over time. There are different ways 
of defining this second random component, depending on the specification of an 
``extremal event''. In the case of a stationary time series, one often looks at 
the limiting distribution of the rescaled process conditioned on the event that 
the observation at time 0 exceeds in norm a threshold which tends to infinity. 

In the following, let $\| \cdot\|$ be an arbitrary but fixed norm on 
$\mathbb{R}^d$. Write $\mathcal{L}(X)$ for the distribution of a random 
quantity $X$ and $\mathcal{L}(X|A)$ or $\mathcal{L}(X|Y)$ for the distribution 
of $X$ conditioned on the event $A$ or the $\sigma$-algebra which is generated 
from the random quantity $Y$. We denote weak convergence by 
$\overset{w}{\Rightarrow}$. 
\newpage
\begin{definition}\label{rem:MRTS}
Let $(X_t)_{t \in \mathbb{Z}}$ be a stationary time series with values in $ 
\mathbb{R}^d$. If there exists a time series $(\Theta_t)_{t \in \mathbb{Z}}$, 
such that
\begin{equation}\label{convtotheta} \mathcal{L}\left( 
\left(\frac{X_{s}}{\|X_0\|}, \ldots, \frac{X_t}{\|X_0\|}\right)\Bigg| \, 
\|X_0\|>x\right) \overset{w}{\Rightarrow} \mathcal{L}(\Theta_s, \ldots, 
\Theta_t), \;\;\; x \to \infty,
\end{equation}
for all $s,t \in \mathbb{Z}$, then we call $(\Theta_t)_{t \in \mathbb{Z}}$ the 
\emph{spectral tail process} of $(X_t)_{t \in \mathbb{Z}}$. The process 
$(\Theta_t)_{t \in \mathbb{N}_0}$ is called the \emph{forward spectral tail 
process} and $(\Theta_{-t})_{t \in \mathbb{N}_0}$ is called the \emph{backward 
spectral tail process}.

If there exists a time series $(Y_t)_{t \in \mathbb{Z}}$ such that
\begin{equation}\label{convtotail} \mathcal{L}\left( \left(\frac{X_{s}}{x}, 
\ldots, \frac{X_t}{x}\right)\Bigg| \, \|X_0\|>x\right) \overset{w}{\Rightarrow} 
\mathcal{L}(Y_s, \ldots, Y_t), \;\;\; x \to \infty,
\end{equation}
for all $s,t \in \mathbb{Z}$, then we call $(Y_t)_{t \in \mathbb{Z}}$ the 
\emph{tail process} of $(X_t)_{t \in \mathbb{Z}}$.
\end{definition}
\begin{remark}\label{rem:tailproc}
As shown in \cite{BaSe09}, Theorems 2.1 and 3.1, for a stationary time series 
$(X_t)_{t \in \mathbb{Z}}$ the two convergences \eqref{convtotheta} and 
\eqref{convtotail} are equivalent and we have
$$ (Y_t)_{t \in \Z}\overset{d}{=} (Y \cdot \Theta_t)_{t \in \Z},$$
for a Pareto$(\alpha)$-distributed r.v.\ $Y$ independent of $(\Theta_t)_{t \in 
\Z}$ (where $\overset{d}{=}$ denotes equality in distribution). The value of 
$\alpha$ is called the \emph{index of regular variation} of $(X_t)_{t \in \Z}$. 
Furthermore, any of these two convergences is equivalent to that the time 
series is regularly varying, i.e.\ all its finite dimensional distributions are 
multivariate regularly varying. 
\end{remark}
In the following we will always deal with a stationary underlying process 
$(X_t)_{t \in \Z}$. It is important to note that the spectral tail process 
$(\Theta_t)_{t \in \Z}$ of an underlying stationary process is in general not a 
stationary time series, since we condition on a particular event that happens 
at time 0. However, one can show that the resulting spectral tail process of a 
stationary underlying time series satisfies a different property instead. In 
the following we write $0$ for the real number or a vector or sequence 
consisting of all zeros. The meaning should be clear from the context.
\begin{property}[TCF]\label{A1}
We say that a time series $(\Theta_t)_{t \in \mathbb{Z}}$ with values in 
$\mathbb{R}^d$ satisfies \emph{Property (TCF)} if $P(\|\Theta_0\|=1)=1$ and the 
so-called ``time-change formula'' (\cite{SeZhMe17}) holds, i.e.\ if for 
$\alpha>0$
 \begin{equation}\label{TCF} E(f(\Theta_{s-i}, \ldots, 
\Theta_{t-i}))=E\left(f\left(\frac{\Theta_s}{\|\Theta_i\|}, \ldots, 
\frac{\Theta_t}{\|\Theta_i\|}\right)\mathds{1}_{\{\|\Theta_i\|>0\}}
\|\Theta_i\|^\alpha\right)
 \end{equation}
for all $s \leq 0 \leq t, i \in \mathbb{Z},$ and all bounded and continuous 
functions $f:(\mathbb{R}^d)^{t-s+1} \to \mathbb{R}$ such that $f(\theta_s, 
\ldots, \theta_t)=0$ whenever $\theta_0=0$. 
\end{property}
Note that the above property depends on the parameter $\alpha>0$, so it would 
be more precise to speak of Property (TCF($\alpha$)) instead, but for reasons 
of brevity we omit the parameter which we assume to be fixed throughout. 
\begin{remark}\label{rem:MRTSrep}
\begin{enumerate}
 \item We have added the indicator function in \eqref{TCF} in order to make 
clear that the expression in the expected value is to be interpreted as $0$ on 
the set where $\|\Theta_i\|=0$ (and thus the argument of the function $f$ is 
not defined). In the following we will in most cases omit the indicator 
function 
for the sake of brevity.
 \item Similar as in \cite{JaSe14} one can show that Property (TCF) implies 
that \eqref{TCF} even holds for all bounded and {\em measurable} functions $f$ 
such that $f(\theta_s, \ldots, \theta_t)$ $=0$ whenever $\theta_0=0$. 
 \item As the law of $(\Theta_t)_{t \in \mathbb{Z}}$ is completely determined by its finite-dimensional distributions,
Property (TCF) implies that 
 \begin{equation}\label{Eq:TCFfull} E(f((\Theta_{t-i})_{t \in 
\mathbb{Z}}))=E\left(f\left(\left(\frac{\Theta_t}{\|\Theta_i\|}\right)_{t \in 
\mathbb{Z}}\right)\|\Theta_i\|^\alpha\right), \;\;\; i \in \Z,
 \end{equation}
 for all bounded functions $f:(\mathbb{R}^d)^{\mathbb{Z}} \to \mathbb{R}$ such 
that $f((\theta_t)_{t \in \mathbb{Z}})=0$ whenever $\theta_0=0$ and $f$ is 
$\mathcal{B}((\mathbb{R}^d)^{\mathbb{Z}})$-$\mathcal{B}(\R)$-measurable, where 
$\mathcal{B}(\cdot)$ stands for the corresponding Borel $\sigma$-algebra.
 \end{enumerate}
\end{remark}
As shown in \cite{BaSe09}, Theorem 3.1, Property (TCF) (with $\alpha$ being 
equal to the index of regular variation) is always satisfied for a spectral 
tail process of an underlying stationary regularly varying process. See also 
\cite{SeZhMe17} for the more general case of $X_t$ taking values in a 
``star-shaped'' metric space. 

We will subsequently deal with the question whether in turn Property (TCF) implies 
that the corresponding process is a spectral tail process of some underlying 
process. We show that the answer is yes and we will construct corresponding 
underlying max-stable processes, see Theorem \ref{the:general}. It will turn 
out that the case where additionally a summability or short-range dependence 
condition (see Section \ref{Sec:ShiftInv} for details) is satisfied allows for 
a particularly simple construction which is based on multivariate mixed moving 
maxima processes, see Theorem \ref{the:main} and Corollary \ref{Cor:main}. This 
construction allows us to connect the extremal behavior of the process 
conditioned on the specific event of an extremal exceedance at time 0 with the 
overall extremal behavior over time as modelled by a max-stable process, see 
Proposition \ref{Prop:max-stable}. The spectral tail process and the 
approximation of extremal events by a max-stable process thus provide two 
complementary views on the extremal behavior of a regularly varying time 
series. Additionally, this point of view also gives rise to a theoretical 
motivation (Proposition \ref{prop:cond}) for the POT method for dependent 
observations which uses clusterwise maxima for estimation of extremal 
parameters, see \cite{DaSm90}.

The very recent works \cite{PlSo17} and \cite{DoHaSo17} address similar questions, where properties of the spectral tail process and max-stable representations are analyzed with the help of the so-called tail measure, which is an extension of the limit measure in multivariate regular variation to the sequence space. Our approach here is more focussed on the distribution of the spectral tail process itself. It allows for a new interpretation of the property (TCF), see Theorem \ref{the:RS}, and representations of corresponding underlying max-stables processes which are generated from i.i.d.\ copies of the spectral tail process, thereby facilitating for example simulation from these processes.

There are also some links between our work and \cite{EnMaOeSc14}, who explore 
connections between different representations of univariate max-stable 
processes. However, our approach differs substantially by starting solely from Property (TCF) and focussing on the particular properties implied by 
stationarity of processes. Finally, some connections exist between the topics 
studied here and theoretical properties of stationary max- and sum-stable 
processes which were analyzed in (among others) \cite{Ro95}, \cite{RoSa08}, 
\cite{WaSt10} and \cite{DoKa16}. But there the analysis starts from a stable 
process and is focussed on suitable decompositions of this process, while our 
approach is somehow the other way round by starting with the spectral process 
and finding a suitable composition of copies from this process in order to 
generate a max-stable process with given properties. Furthermore, the 
aforementioned works rely heavily on tools from dynamical systems theory, while our approach is solely based on the description of finite-dimensional 
distributions of stochastic processes and point process techniques, which are 
two standard tools of extreme value theory for time series. 

The rest of the paper is organized as follows: Section \ref{Sec:ShiftInv} 
introduces and analyzes a short-range dependence condition which, if satisfied 
in addition to Property (TCF), allows us to construct an underlying mixed 
moving maxima process with given spectral tail process, as shown in Section 
\ref{Sec:Construction}. At the end of this section, we also discuss 
implications of our results for statistical analysis. Section \ref{Sec:general} 
finally treats the general case where we show that again an underlying 
stationary process exists but has a more complicated representation than in the 
short-range dependence setting.

\section{A shift invariance property derived from Property 
(TCF)}\label{Sec:ShiftInv}
We start by showing that Property (TCF), together with a summability 
assumption, implies a certain shift invariance of the process $(\Theta_t)_{t 
\in \mathbb{Z}}$. 
 \begin{property}[SC] We say that a time series $(\Theta_t)_{t \in \mathbb{Z}}$ 
with values in $\mathbb{R}^d$ satisfies the summability condition $\mbox{(SC)}$ 
for $\alpha>0$ if 
$$0<\sum_{t \in \mathbb{Z}} \| \Theta_t \|^\alpha < \infty\;\;\;  \mbox{a.s.} $$
 \end{property}
 
 The same comment as for Property (TCF) about the omission of the parameter 
$\alpha$ also applies here. 
 In case that Property (SC) is satisfied, we introduce the following notation
 $$ \|\Theta\|_\alpha=\| (\Theta_t)_{t \in \mathbb{Z}}\|_\alpha :=\left(\sum_{t 
\in \mathbb{Z}}\| \Theta_t\|^\alpha \right)^{1/\alpha}.$$
 For $\alpha \geq 1$, this is just the $L_\alpha$-norm of $(\|\Theta_t\|)_{t 
\in \mathbb{Z}}$, but remember that $\alpha<1$ is also possible. 

For further analysis of Property (SC) we also introduce
\begin{equation}\label{Eq:Thetastar} \| \Theta^\ast 
\|=\|\Theta^\ast\|((\Theta_t)_{t \in \mathbb{Z}}):=\sup_{t \in \mathbb{Z}}\| 
\Theta_t\| \in [0, \infty ] 
\end{equation}
and 
\begin{equation}\label{Eq:Tstar} T^\ast =T^\ast((\Theta_t)_{t \in 
	\mathbb{Z}}):= \inf \{t \in \mathbb{Z}: \| \Theta_t\|=\|\Theta^\ast \| \} \in 
\overline{\mathbb{Z}}:=\mathbb{Z} \cup \{-\infty\} \cup \{\infty\}. 
\end{equation}
As usual, we set $\inf(\mathbb{Z})=-\infty$ and $\inf(\emptyset)=\infty$. So, 
in particular, $\| \Theta^\ast \|=\infty$ implies $T^\ast=\infty$. 
 
At first, Property (SC) may look rather restrictive, but the following lemma shows that once Property (TCF) is satisfied, we have equivalent assumptions which seem reasonable for a large class of processes, see also Remark \ref{rem:sum}. See also \cite{PlSo17}, Corollary 3.3, for this statement and an alternative proof. 

 \begin{lemma}\label{equivalence}
 Let Property (TCF) hold. Then the following statements are equivalent:
 \begin{enumerate}
  \item $\sum_{t \in \mathbb{Z}} \| \Theta_t \|^\alpha < \infty \;\;$ a.s. 
(i.e. Property (SC) holds)
  \item $\|\Theta_t\| \to 0 $ a.s.\ for $|t| \to \infty$.
  \item $P(T^\ast \in \mathbb{Z})=1$
 \end{enumerate}
 \end{lemma}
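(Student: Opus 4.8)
The plan is to prove the three statements equivalent in the cyclic order (i) $\Rightarrow$ (ii) $\Rightarrow$ (iii) $\Rightarrow$ (i), with only the last implication using Property (TCF) in an essential way. Throughout write $S:=\sum_{t\in\Z}\|\Theta_t\|^\alpha$, and note that Property (TCF) forces $P(\|\Theta_0\|=1)=1$, so $S\geq\|\Theta_0\|^\alpha=1$ and also $\|\Theta^\ast\|\geq 1$; hence in (i) only the finiteness of $S$ is at issue. The two easy implications I would dispatch first. For (i) $\Rightarrow$ (ii), the general term of an almost surely convergent series of nonnegative numbers tends to $0$, so $\|\Theta_t\|^\alpha\to 0$ and thus $\|\Theta_t\|\to 0$ as $|t|\to\infty$. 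For (ii) $\Rightarrow$ (iii), if $\|\Theta_t\|\to 0$ then all but finitely many $\|\Theta_t\|$ lie below $\tfrac12<1\leq\|\Theta^\ast\|$, so the supremum $\|\Theta^\ast\|$ is attained on a finite index set and the first index $T^\ast$ attaining it is finite, i.e.\ $P(T^\ast\in\Z)=1$.

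The substance is (iii) $\Rightarrow$ (i), and the key device is to anchor the process at the location of its running maximum and then sum the time-change formula \eqref{Eq:TCFfull} over all shifts. The point is that both $S$ and the event $\{T^\ast=0\}$ behave well under the shift $(\theta_t)_t\mapsto(\theta_{t-i})_t$: the quantity $S$ is shift-invariant, while $T^\ast((\Theta_{t-i})_t)=i+T^\ast((\Theta_t)_t)$, so that $\{T^\ast((\Theta_{t-i})_t)=0\}=\{T^\ast=-i\}$. I would apply \eqref{Eq:TCFfull} to the bounded measurable functional
\[
g((\theta_t)_{t\in\Z}):=\mathds{1}\Big\{T^\ast((\theta_t)_t)=0,\ \textstyle\sum_{t\in\Z}\|\theta_t\|^\alpha=\infty\Big\},
\]
which is admissible because it vanishes whenever $\theta_0=0$: indeed $T^\ast=0$ together with $\theta_0=0$ would force $\|\theta^\ast\|=0$, hence $\theta\equiv 0$ and a finite (zero) sum. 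Summing the identity $E(g((\Theta_{t-i})_t))=E\big(g((\Theta_t/\|\Theta_i\|)_t)\,\|\Theta_i\|^\alpha\big)$ over $i\in\Z$, the left-hand side becomes $\sum_{i\in\Z}P(T^\ast=-i,\,S=\infty)=P(S=\infty)$, using $P(T^\ast\in\Z)=1$. On the right-hand side, $T^\ast$ is scale invariant and $\sum_t(\|\Theta_t\|/\|\Theta_i\|)^\alpha=S/\|\Theta_i\|^\alpha$, so each summand with $\|\Theta_i\|>0$ reduces to $E(\mathds{1}\{T^\ast=0,\,S=\infty\}\,\|\Theta_i\|^\alpha)$, while summands with $\|\Theta_i\|=0$ vanish through the factor $\|\Theta_i\|^\alpha$; summing over $i$ and pulling the sum inside yields $E(\mathds{1}\{T^\ast=0,\,S=\infty\}\,S)$.

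Equating the two sides gives $P(S=\infty)=E(\mathds{1}\{T^\ast=0,\,S=\infty\}\,S)$. On the event $\{T^\ast=0,\,S=\infty\}$ the integrand equals $+\infty$, whereas the left-hand side is at most $1$; hence that event must be null, and therefore $P(S=\infty)=0$, which is precisely Property (SC). All interchanges of summation and expectation are legitimate by nonnegativity (Tonelli). I expect the main obstacle to be conceptual rather than computational, namely spotting that one should anchor at $\{T^\ast=0\}$ and sum \eqref{Eq:TCFfull} over \emph{all} shifts; the remaining work is the bookkeeping of verifying that $g$ is bounded, measurable, and vanishes on $\{\theta_0=0\}$, together with the shift and scaling behaviour of $T^\ast$ and $S$ and the degenerate cases $\|\Theta_i\|=0$ and $\|\Theta^\ast\|=\infty$.
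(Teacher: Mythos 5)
Your proof is correct and takes essentially the same route as the paper: both handle (i)$\Rightarrow$(ii)$\Rightarrow$(iii) by the same elementary observations and prove (iii)$\Rightarrow$(i) by applying the full-sequence time-change formula \eqref{Eq:TCFfull} to the indicator of the event that the maximum is first attained at a given time, summing over all shifts, and playing the bound $\sum_k P(T^\ast = i^\ast - k)=1$ off against an infinite expectation. The only cosmetic difference is direction: the paper fixes a deterministic $i^\ast$ with $P(T^\ast=i^\ast,\,\sum_k\|\Theta_k\|^\alpha=\infty)>0$, starts from $E\bigl(\mathds{1}_{\{T^\ast=i^\ast\}}\sum_k\|\Theta_k\|^\alpha\bigr)=\infty$ and converts the weighted terms into shifted indicators, whereas you build $\{S=\infty\}$ into the functional and read the same identity the other way to obtain $P(S=\infty)=E\bigl(\mathds{1}_{\{T^\ast=0,\,S=\infty\}}\,S\bigr)$.
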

 
 \begin{proof}
 If (i) holds, then necessarily $\| \Theta_t \| \to 0$ a.s.\ as $|t| \to 
\infty$ and thereby (ii) follows. Since furthermore $\| \Theta_0\|=1$ a.s., statement (ii) implies that both 
$\|\Theta^\ast\|$ and $T^\ast$ are finite a.s. and (iii) holds. It is left to show that (iii) implies (i).
 
Let (iii) hold and assume that $P(\sum_{k \in \mathbb{Z}} \| \Theta_k \|^\alpha= 
\infty)>0$. Then there exists an $i^\ast \in \Z$ such that $P(\sum_{k \in 
\mathbb{Z}} \| \Theta_k \|^\alpha=\infty, T^\ast=i^\ast)>0$ and therefore
\begin{eqnarray*} \infty&=&E\left(\mathds{1}_{\{T^\ast=i^\ast\}}\sum_{k \in 
\mathbb{Z}} \| \Theta_k \|^\alpha\right) \\
&=&\sum_{k \in \mathbb{Z}}  E\left(\mathds{1}_{\{T^\ast=i^\ast\}} \| \Theta_k 
\|^\alpha\right) \\
&=& \sum_{k \in \mathbb{Z}}  
E\left(\mathds{1}_{\{\|\Theta_j\|<\|\Theta_{i^\ast}\|, j<i^\ast, 
\|\Theta_j\|\leq \|\Theta_{i^\ast}\|, j\geq i^\ast\}} 
\mathds{1}_{\{\Theta_0\neq 0\}}\| \Theta_k \|^\alpha\right) \\
&=& \sum_{k \in \mathbb{Z}}  
E\left(f\left(\left(\frac{\Theta_t}{\|\Theta_k\|}\right)_{t \in 
\Z}\right)\|\Theta_k \|^\alpha\right) 
\end{eqnarray*}
with
$$ f((\theta_t)_{t \in \Z})= \mathds{1}_{\{\|\theta_j\|<\|\theta_{i^\ast}\|, 
j<i^\ast, \|\theta_j\|\leq \|\theta_{i^\ast}\|, j\geq 
i^\ast\}}\mathds{1}_{\{\theta_0\neq 0\}}, $$
which is bounded, measurable and $f((\theta_t)_{t \in \mathbb{Z}})=0$ whenever 
$\theta_0=0$. Therefore, with \eqref{Eq:TCFfull} applied to each summand, this 
expression equals
\begin{eqnarray*}
&& \sum_{k \in \mathbb{Z}}  E\left(f\left(\left(\Theta_{t-k}\right)_{t \in 
\Z}\right)\right)\\
&=&  \sum_{k \in \mathbb{Z}}  
E\left(\mathds{1}_{\{\|\Theta_j\|<\|\Theta_{i^\ast-k}\|, j<i^\ast-k, 
\|\Theta_j\|\leq \|\Theta_{i^\ast-k}\|, j\geq i^\ast-k\}} 
\mathds{1}_{\{\Theta_{-k}\neq 0\}}\right) \\
&\leq &  \sum_{k \in \mathbb{Z}}  
E\left(\mathds{1}_{\{\|\Theta_j\|<\|\Theta_{i^\ast-k}\|, j<i^\ast-k, 
\|\Theta_j\|\leq \|\Theta_{i^\ast-k}\|, j\geq i^\ast-k\}}\right)\\
&=& \sum_{k \in \mathbb{Z}} P(T^\ast=i^\ast-k)=1,
\end{eqnarray*}
which leads to a contradiction to our assumption that $P(\sum_{k \in 
\mathbb{Z}} \| \Theta_k \|^\alpha= \infty)>0$ and thereby proves the statement.
 \end{proof}
\begin{remark}\label{rem:sum}
 The assumption $\|\Theta_t\|\to 0$ as $|t|\to 
\infty$ excludes a sort of long-range dependence in extremes and it 
has been shown in \cite{BaSe09}, Proposition 4.2, that it is implied by a 
property introduced in \cite{DaHs95}: For a sequence $a_n \to \infty$ with 
$nP(\|X_0\|>a_n) \to c>0$, it is satisfied if there exists a 
sequence $r_n\to \infty, r_n/n \to 0$ such that
 \begin{equation}\label{Eq:finitemeancluster} \lim_{m \to \infty}\limsup_{n \to \infty}P\left(\max_{m \leq |t|\leq 
r_n}\|X_t\|>a_nu \, \middle| \, \|X_0\|>a_n u\right)=0 \;\;\; \mbox{ for all }u \in (0,\infty).
\end{equation}
 This property is sometimes called ``anti-clustering condition'' or 
``finite mean cluster size condition'' and is frequently used in the 
literature, 
c.f., e.g., \cite{DaMi98}, \cite{BaKrSe12} or \cite{MiWi16}. It is satisfied for a large variety of time series models such as ARMA models, Max-moving average processes, stochastic volatility models or GARCH(p,q) processes (under mild assumptions about coefficients and innovations, respectively), cf.\ \cite{BaDaMi02} and \cite{MiZh14}. 

So, by Lemma \ref{equivalence}, if $(\Theta_t)_{t \in \mathbb{Z}}$ is a spectral tail process and the underlying process satisfies \eqref{Eq:finitemeancluster}, then $(\Theta_t)_{t \in \mathbb{Z}}$ satisfies Property (SC). 

Alternatively, a Markovian structure of the spectral tail process as discussed in \cite{JaSe14} can simplify the task to check whether $\| \Theta_t \| \to 0$ a.s. as $|t| \to 
\infty$ and thus Property (SC) is satisfied.
\end{remark}

Under the assumption of Property (SC) we will now formulate an equivalent 
statement for Property (TCF). This equivalence allows for a probabilistic 
interpretation of the time change formula in form of an invariance property of 
$(\Theta_t)_{t \in \Z}$ under a specific random shift of time. 
\begin{theorem}\label{the:RS}
Let $(\Theta_t)_{t \in \Z}$ with values in $\mathbb{R}^d$ be a time series 
which satisfies Property (SC). Furthermore, let $(\Theta_t^{\mbox{\scriptsize 
RS}})_{t \in \Z}$ be a time series such that
\begin{equation}\label{Eq:RSdef1}(\Theta_t^{\mbox{\scriptsize RS}})_{t \in 
\Z}\overset{d}{=}\left(\frac{\Theta_{t+K(\Theta)}}{\|\Theta_{K(\Theta)}\|}
\right)_{t \in \Z}, 
\end{equation}
where $K(\Theta)=K((\Theta_t)_{t \in \Z})$ is a random integer with conditional 
probability mass function
\begin{equation}\label{Eq:RSdef2} P(K(\Theta)=k | (\Theta_t)_{t \in 
\Z})=\frac{\|\Theta_k\|^\alpha}{\sum_{t \in 
\Z}\|\Theta_t\|^\alpha}=\frac{\|\Theta_k\|^\alpha}{\|\Theta\|_\alpha^\alpha}, 
\;\;\; k \in \Z. 
\end{equation}
Then
\begin{equation}\label{Eq:RSsame} (\Theta_t^{\mbox{\scriptsize RS}})_{t \in 
\Z}\overset{d}{=}(\Theta_t)_{t \in \Z} 
\end{equation}

if and only if $(\Theta_t)_{t \in \Z}$ satisfies Property (TCF). 
\end{theorem}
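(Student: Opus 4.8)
The plan is to reduce the asserted equivalence to a single family of identities and then read off each direction from it. Write $\tilde\Theta^{(k)}:=(\Theta_{t+k}/\|\Theta_k\|)_{t\in\Z}$, defined on $\{\Theta_k\neq 0\}$, and record the scaling relation $\|\tilde\Theta^{(k)}\|_\alpha^\alpha=\|\Theta\|_\alpha^\alpha/\|\Theta_k\|^\alpha$. Conditioning on $(\Theta_t)_{t\in\Z}$ in \eqref{Eq:RSdef1}--\eqref{Eq:RSdef2}, for every bounded measurable $g\colon(\R^d)^\Z\to\R$ one gets
\begin{equation*}
E\bigl(g((\Theta_t^{\mbox{\scriptsize RS}})_{t\in\Z})\bigr)=\sum_{k\in\Z}E\Bigl(g(\tilde\Theta^{(k)})\,\tfrac{\|\Theta_k\|^\alpha}{\|\Theta\|_\alpha^\alpha}\Bigr).
\end{equation*}
Since the $0$th entry of each sequence involved has norm $1$, so that $\|\cdot\|_\alpha^\alpha\ge 1$ along them, I can reweight by $\|\cdot\|_\alpha^{\pm\alpha}$ (with a routine truncation for the unbounded factor, and Remark \ref{rem:MRTSrep}(ii) to pass from continuous to measurable test functions). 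This shows that \eqref{Eq:RSsame} is equivalent to the summed balance
\begin{equation*}
\sum_{k\in\Z}E\bigl(g(\tilde\Theta^{(k)})\mathds{1}_{\{\Theta_k\neq 0\}}\bigr)=E\bigl(g(\Theta)\,\|\Theta\|_\alpha^\alpha\bigr),
\end{equation*}
valid for all bounded measurable $g$; call this identity (S).

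For the forward direction I would derive (S) from Property (TCF). Apply \eqref{Eq:TCFfull} with $i=-k$ to the admissible test function $F((\psi_t)_t):=g\bigl((\psi_t/\|\psi_0\|)_t\bigr)\mathds{1}_{\{\psi_0\neq 0\}}$, which vanishes whenever $\psi_0=0$. Using $\|\Theta_0\|=1$, its left-hand side becomes $E\bigl(g(\tilde\Theta^{(k)})\mathds{1}_{\{\Theta_k\neq 0\}}\bigr)$ and its right-hand side becomes $E\bigl(g(\Theta)\|\Theta_{-k}\|^\alpha\bigr)$; call this identity $(I_k)$. Summing $(I_k)$ over $k\in\Z$ and using $\sum_{k}\|\Theta_{-k}\|^\alpha=\|\Theta\|_\alpha^\alpha$ yields (S), hence \eqref{Eq:RSsame}. (Conversely, a general $F$ in \eqref{Eq:TCFfull} decomposes through its radial part $\|\psi_0\|$ and its normalised part, and this reduction shows that the whole family $\{(I_k)\}_{k\in\Z}$ is in fact equivalent to Property (TCF); I will use this equivalence again below.)

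The reverse direction is the crux: from the single identity (S) I must recover the individual formulas $(I_k)$, i.e.\ Property (TCF). The hard part is that the left-hand side of (S) mixes all shifts. Here Property (SC) is decisive. By Lemma \ref{equivalence}, $T^\ast$ is a.s.\ finite, and the re-anchoring maps $m_k\colon\phi\mapsto(\phi_{t+k}/\|\phi_k\|)_t$ form a $\Z$-action ($m_k\circ m_l=m_{k+l}$) with $T^\ast(m_k\phi)=T^\ast(\phi)-k$, so $\{T^\ast=0\}$ is a measurable fundamental domain met exactly once by each orbit. The point is that $T^\ast(\tilde\Theta^{(k)})=T^\ast(\Theta)-k$ is an \emph{intrinsic} functional of the re-anchored sequence, so choosing $g$ to depend on the maximum-position of its argument localises (S) level by level. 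Concretely, coordinatise $\Theta$ by its maximum-anchored version $\Theta^\circ:=\tilde\Theta^{(T^\ast)}\in\{T^\ast=0\}$ together with $T^\ast$, and set $\rho_m(\cdot):=P(\Theta^\circ\in\cdot,\,T^\ast=m)$, so that $\bar\rho:=\sum_m\rho_m=\mathcal{L}(\Theta^\circ)$ is a probability measure.

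Rewriting (S) in these coordinates is where everything collapses. On the left the inner summation runs over the whole orbit, so after the change of variables it only sees the orbit-law $\bar\rho$ and is the same at every level; matching against the right-hand side level by level forces the explicit joint law
\begin{equation*}
\rho_m(d\phi)=\frac{\|\phi_{-m}\|^\alpha}{\|\phi\|_\alpha^\alpha}\,\bar\rho(d\phi),\qquad m\in\Z .
\end{equation*}
A direct substitution then checks that this explicit law yields $E\bigl(g(\tilde\Theta^{(k)})\mathds{1}_{\{\Theta_k\neq 0\}}\bigr)=E\bigl(g(\Theta)\|\Theta_{-k}\|^\alpha\bigr)$ for every $k$, i.e.\ the full family $\{(I_k)\}$, hence Property (TCF). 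Thus the genuine obstacle is precisely this disaggregation of the summed balance into the individual time-change formulas, and it is exactly Property (SC), through the a.s.-finite $T^\ast$ and its fundamental domain, that removes it; the remaining points---the $\|\cdot\|_\alpha^{\pm\alpha}$ reweighting, the a.e.\ domains on which the $m_k$ are defined, and the continuous-to-measurable passage---are routine.
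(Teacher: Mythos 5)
Your proposal is correct, and the two halves compare differently with the paper. Your direction (TCF) $\Rightarrow$ \eqref{Eq:RSsame} is essentially the paper's own argument in different notation: summing your identities $(I_k)$ over $k$ is exactly what the paper's auxiliary function $\tilde f$ accomplishes in one display. Your converse, however, is genuinely different. The paper never introduces $T^\ast$: it derives \eqref{TCF} directly by inserting the weights $\|\Theta_k\|^\alpha/\|\Theta\|_\alpha^\alpha$ (which sum to one), applying \eqref{Eq:RSsame} to pass to $(\Theta_t^{\mbox{\scriptsize RS}})_{t\in\Z}$, collapsing the resulting double sum, reindexing, and applying \eqref{Eq:RSsame} a second time, so that (SC) enters only through the well-definedness of the weights. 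You instead disintegrate over the maximum-anchored representative $\Theta^\circ$ and the argmax time $T^\ast$, localise your summed balance (S) with test functions depending on $T^\ast$ of the argument, and so pin down $\rho_m(d\phi)=\|\phi_{-m}\|^\alpha\|\phi\|_\alpha^{-\alpha}\,\bar\rho(d\phi)$; I checked the localisation and the final substitution, and both work. This costs more machinery but buys more: your formula for $\rho_m$ is precisely the disintegration that the paper proves separately as Proposition \ref{prop:cond}, so your route establishes that result en passant, and it is closer in spirit to the tail-measure and spectral-decomposition viewpoint of \cite{PlSo17} and \cite{DoKa16}. Three details need patching. First, do not cite Lemma \ref{equivalence} for $P(T^\ast\in\Z)=1$: that lemma assumes (TCF), which is what you are proving; fortunately (SC) alone suffices, since summability forces $\|\Theta_t\|\to 0$ for $|t|\to\infty$ and $\|\Theta\|_\alpha>0$ makes the supremum positive, hence attained. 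Second, your parenthetical claim that the family $\{(I_k)\}_{k\in\Z}$ is equivalent to (TCF) is true but needs the explicit observation that the radial part is recoverable from the re-anchored sequence: since $\|\Theta_0\|=1$ a.s.\ one has $\|\tilde\Theta^{(k)}_{-k}\|=1/\|\Theta_k\|$, so for $F$ as in \eqref{Eq:TCFfull} one may take $g(\phi)=F\bigl((\phi_t/\|\phi_{-k}\|)_t\bigr)\mathds{1}_{\{\phi_{-k}\neq 0\}}$ in $(I_k)$; a test function of the normalised sequence alone would otherwise not see $\|\Theta_k\|$, and "decomposes through its radial part'' does not by itself resolve this. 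Third, $P(\|\Theta_0\|=1)=1$ must be extracted from \eqref{Eq:RSsame} at the outset (as the paper does, from $\|\Theta_0^{\mbox{\scriptsize RS}}\|=1$ a.s.), since you use it both for this recoverability and for your coordinatisation $\Theta\leftrightarrow(\Theta^\circ,T^\ast)$ to be invertible via $\Theta=\bigl(\Theta^\circ_{t-T^\ast}/\|\Theta^\circ_{-T^\ast}\|\bigr)_{t\in\Z}$.
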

The index ${\mbox{\it \scriptsize RS}}$ in the time series above may be 
interpreted as standing both for ``random shift" and ``re-scaled". 
\begin{proof} 
$1. ``\Rightarrow"$: Assume that all assumptions of Theorem \ref{the:RS} are 
met and that \eqref{Eq:RSsame} holds. Since $\|\Theta_0^{\mbox{\scriptsize 
RS}}\|=1$ a.s.\ by \eqref{Eq:RSdef1}, we have $P(\|\Theta_0\|=1)=1$, i.e.\ the 
first part of Property (TCF) is satisfied. For $s\leq 0 \leq t$ let 
$f:(\mathbb{R}^d)^{t-s+1} \to \mathbb{R}$ be a bounded and continuous function 
such that $f(y_s, \ldots, y_t)=0$ whenever $y_0=0$. Then, for $i \in \Z$,
\begin{eqnarray*}
&& E(f(\Theta_{s-i}, \ldots, \Theta_{t-i}))\\
&=& E\left(\sum_{k \in 
\Z}\frac{\|\Theta_k\|^\alpha}{\|\Theta\|_\alpha^\alpha}f(\Theta_{s-i}, \ldots, 
\Theta_{t-i})\right) \\
&=& E\left(\sum_{k \in \Z}\frac{\|\Theta^{\mbox{\scriptsize 
RS}}_k\|^\alpha}{\|\Theta^{\mbox{\scriptsize 
RS}}\|_\alpha^\alpha}f(\Theta^{\mbox{\scriptsize RS}}_{s-i}, \ldots, 
\Theta^{\mbox{\scriptsize RS}}_{t-i})\right) \\
&=& E\left(E\left(\sum_{k \in 
\Z}\frac{\|\Theta_{k+K(\Theta)}/\|\Theta_{K(\Theta)}\|\|^\alpha}{
\|\Theta/\|\Theta_{K(\Theta)}\|\|_\alpha^\alpha}f\left(\frac{\Theta_{
s-i+K(\Theta)}}{\|\Theta_{K(\Theta)}\|}, \ldots, 
\frac{\Theta_{t-i+K(\Theta)}}{\|\Theta_{K(\Theta)}\|}\right)\middle| 
(\Theta_t)_{t \in \Z}\right)\right)\\
&=& E\left(\sum_{k\in\Z} \sum_{l \in 
\Z}\frac{\|\Theta_l\|^\alpha}{\|\Theta\|_\alpha^\alpha}\frac{\|\Theta_{k+l}
\|^\alpha}{\|\Theta\|_\alpha^\alpha}f\left(\frac{\Theta_{s-i+l}}{\|\Theta_l\|}, 
\ldots, \frac{\Theta_{t-i+l}}{\|\Theta_l\|}\right)\right),
\end{eqnarray*}
where we used \eqref{Eq:RSdef1}, \eqref{Eq:RSdef2} and the assumption 
\eqref{Eq:RSsame}. The last expression simplifies to
\begin{eqnarray*}
&& E\left(\sum_{l\in\Z} 
\frac{\|\Theta_l\|^\alpha}{\|\Theta\|_\alpha^\alpha}f\left(\frac{\Theta_{s-i+l}}
{\|\Theta_l\|}, \ldots, \frac{\Theta_{t-i+l}}{\|\Theta_l\|}\right)\right)\\
&=& E\left(\sum_{m \in \Z} 
\frac{\|\Theta_{i+m}\|^\alpha}{\|\Theta\|_\alpha^\alpha}f\left(\frac{\Theta_{s+m
}}{\|\Theta_{i+m}\|}, \ldots, 
\frac{\Theta_{t+m}}{\|\Theta_{i+m}\|}\right)\right), \;\;\; \mbox{ with } 
m=l-i,\\
&=& E\left(\sum_{m \in \Z} 
\frac{\|\Theta_{m}\|^\alpha}{\|\Theta\|_\alpha^\alpha}f\left(\frac{\Theta_{s+m}
/\|\Theta_m\|}{\|\Theta_{i+m}/\|\Theta_m\|\|}, \ldots, 
\frac{\Theta_{t+m}/\|\Theta_m\|}{\|\Theta_{i+m}/\|\Theta_m\|\|}\right)\frac{
\|\Theta_{i+m}\|^\alpha}{\|\Theta_m\|^\alpha}\right)\\
&=& E\left(f\left(\frac{\Theta_s}{\|\Theta_i\|}, \ldots, 
\frac{\Theta_t}{\|\Theta_i\|}\right)\|\Theta_i\|^\alpha\right),
\end{eqnarray*}
where in the last step again \eqref{Eq:RSsame} was used. In the penultimate 
step we used the assumption about $f$ which guarantees that the corresponding 
summand in the expected value vanishes as soon as $\|\Theta_m\|=0$, which 
allows 
to expand the fraction by $\|\Theta_m\|^\alpha$ (again, all expressions are 
assumed to equal 0 as soon as one of the factors is 0, cf.\ Remark 
\ref{rem:MRTSrep} (i)). This proves Property (TCF). 

$2. ``\Leftarrow"$: Assume now that Property (TCF) holds in addition to 
Property (SC) and let $(\Theta_t^{\mbox{\scriptsize RS}})_{t \in \Z}$ have a 
distribution as defined in \eqref{Eq:RSdef1} and \eqref{Eq:RSdef2}. Then it is 
sufficient to show for all bounded and measurable functions 
$f:(\mathbb{R}^d)^{\mathbb{Z}} \to \mathbb{R}$ such that $f((\theta_t)_{t \in 
\mathbb{Z}})=0$ whenever $\theta_0=0$, that
$$ E(f((\Theta_t)_{t \in \Z}))=E(f((\Theta_t^{\mbox{\scriptsize RS}})_{t \in 
\Z})).$$
(The above equality has to hold for all bounded and measurable functions in 
order to have equality in distribution for both processes, but by switching 
from $f$ to $f'((\theta_t)_{t \in \mathbb{Z}})\:=f((\theta_t)_{t \in 
\mathbb{Z}})\cdot \mathds{1}_{\{\theta_0\neq 0\}}$ the above equality is 
sufficient since $P(\Theta_0=0)=P(\Theta_0^{\mbox{\scriptsize RS}}=0)=0$ by 
Property (TCF) and the construction of $(\Theta_t^{\mbox{\scriptsize RS}})_{t 
\in \Z}$.) Set
$$ \tilde{f}((\theta_t)_{t \in 
\mathbb{Z}}):=f\left(\left(\frac{\theta_t}{\|\theta_0\|}\right)_{t \in 
\mathbb{Z}}\right)\frac{\|\theta_0\|^\alpha}{\|(\theta_t)_{t \in 
\Z}\|_\alpha^\alpha}\mathds{1}_{\{\theta_0\neq 0\}}, $$
such that, by using the boundedness of $f$ in order to interchange sum and 
expectation,
\begin{eqnarray*}
 E(f((\Theta_t^{\mbox{\scriptsize RS}})_{t \in \Z}))&=& E\left(\sum_{k \in 
\Z}\frac{\|\Theta_{-k}\|^\alpha}{\|\Theta\|_\alpha^\alpha}f\left(\left(\frac{
\Theta_{t-k}}{\|\Theta_{-k}\|}\right)_{t \in \Z}\right)\right)\\
 &=& \sum_{k \in \Z}E\left(\tilde{f}((\Theta_{t-k})_{t \in \Z})\right) \\
 &=& \sum_{k \in 
\Z}E\left(\tilde{f}\left(\left(\frac{\Theta_t}{\|\Theta_k\|}\right)_{t \in 
\mathbb{Z}}\right)\|\Theta_k\|^\alpha\right) \\
 &=& \sum_{k \in 
\Z}E\left(f\left(\left(\frac{\Theta_t/\|\Theta_k\|}{\|\Theta_0/\|\Theta_k\|\|}
\right)_{t \in 
\mathbb{Z}}\right)\frac{\|\Theta_0/\|\Theta_k\|\|^\alpha}{
\|\Theta/\|\Theta_k\|\|_\alpha^\alpha}\|\Theta_k\|^\alpha\right)\\
 &=& \sum_{k \in \Z}E\left(f((\Theta_t)_{t \in 
\mathbb{Z}})\frac{\|\Theta_k\|^\alpha}{\|\Theta\|_\alpha^\alpha}
\right)=E\left(f((\Theta_t)_{t \in \mathbb{Z}}\right)
\end{eqnarray*}
where we have used \eqref{Eq:TCFfull}. This concludes the proof.
\end{proof}

\section{Construction of a max-stable process with given spectral tail process 
under Property (SC)}\label{Sec:Construction}
In this section, we will show that for each process $(\Theta_t)_{t \in \Z}$ 
which satisfies Properties (TCF) and (SC) there exists an underlying max-stable 
process  which has $(\Theta_t)_{t \in \Z}$ as corresponding spectral tail 
process. The general case where Property (SC) does not need to hold will be 
treated later in Section \ref{Sec:general}. For simplicity, we restrict 
ourselves to the case that all components of $\Theta_t$ are non-negative, but 
see Remark \ref{rem:nonpos} for the general case. Since the term ``max-stable'' 
has different interpretations in the context of multivariate stochastic 
processes in the literature, we start with a definition.
\begin{definition}\label{def:maxstable}
We call a stochastic process $(X_t)_{t \in \mathbb{Z}}$ with values in 
$[0,\infty)^d$ \emph{max-stable with index $\alpha$}, if for all $k \in \N, 
s\leq t, x_s, \ldots, x_t \in [0,\infty]^d$
\begin{equation}\label{eq:defms}(P(X_s\leq x_s, \ldots, X_t\leq 
x_t))^k=P(X_s\leq k^{-1/\alpha}x_s, \ldots, X_t\leq k^{-1/\alpha}x_t),
\end{equation}
where all inequalities are to be interpreted componentwise.
\end{definition}
The next theorem has some similarities to Theorem 4.1 in \cite{EnMaOeSc14}. 
Compared to their result we restrict ourselves to the integers as the index set 
but as an extension we allow both for multivariate observations and also for 
$\Theta_t=0$ for some $t \in \Z$. 
\begin{theorem}\label{the:main}
Let $(\Theta_t)_{t \in \Z}$ with values in $[0,\infty)^d$ be a stochastic 
process which satisfies Property (SC). Furthermore, let $(U_i, 
T_i,(\Theta_t^{(i)})_{t \in \mathbb{Z}})_{i \in \mathbb{N}}$ be an enumeration 
of points from a Poisson point process on $(0,\infty) \times \mathbb{Z} \times  
([0,\infty)^d)^{\Z}$ with intensity $\alpha u^{-\alpha-1} du \otimes 
\lambda(dt) \otimes P^{(\Theta_t)_{t \in \Z}}(d\theta)$, where $\lambda$ 
denotes 
the counting measure on $\mathbb{Z}$, i.e. $\lambda(B)=|B|$ for $B \subset 
\mathbb{Z}$. 
Then the stochastic process
\begin{equation}\label{Eq:max-stable} (Z_t)_{t \in \Z}=\left(\bigvee_{i \in \N} 
U_{i} \frac{\Theta^{(i)}_{t+T_i}}{\|(\Theta_l^{(i)})_{l \in 
\mathbb{Z}}\|_\alpha}  \right)_{t \in \mathbb{Z}} \end{equation}
is an almost surely finite, stationary and max-stable process with index 
$\alpha$ (in \eqref{Eq:max-stable} and in the following, all maxima are meant to be taken componentwise). The process is furthermore regularly varying with corresponding 
spectral tail process 
$(\Theta_t^{\mbox{\scriptsize RS}})_{t \in \mathbb{Z}}$ as defined in Theorem 
\ref{the:RS}.
\end{theorem}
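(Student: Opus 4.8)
The plan is to compute the finite-dimensional distribution functions of $(Z_t)_{t \in \Z}$ directly from the governing Poisson point process and to read off all four assertions from a single exponent functional. Fix $s \le 0 \le t$ and levels $x_s, \ldots, x_t \in [0,\infty)^d$; a point $(u,k,\theta)$ contributes the vector $\frac{u}{\|\theta\|_\alpha}(\theta_{s+k}, \ldots, \theta_{t+k})$ to $(Z_s, \ldots, Z_t)$, so by the void-probability formula for Poisson processes (integrating $u$ against $\alpha u^{-\alpha-1}\,du$, with the conventions $a/0=\infty$ for $a>0$ and $0/0=0$),
$$-\log P(Z_{s} \le x_s, \ldots, Z_t \le x_t) = V(x_s, \ldots, x_t) := E\left[\frac{1}{\|\Theta\|_\alpha^\alpha}\sum_{k \in \Z}\left(\max_{s \le t' \le t,\, 1 \le j \le d}\frac{(\Theta_{t'+k})_j}{(x_{t'})_j}\right)^\alpha\right].$$
Two elementary features of $V$ yield stationarity and max-stability at once: $V$ is invariant under a common time shift of the indices $s,\ldots,t$, because reindexing the sum over $k \in \Z$ absorbs the shift (the counting measure $\lambda$ is shift invariant), and $V$ is homogeneous of degree $-\alpha$ in $(x_s,\ldots,x_t)$, whence $V(k^{-1/\alpha}x_\bullet)=k\,V(x_\bullet)$ and \eqref{eq:defms} follows.

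Next I would establish almost-sure finiteness, where Property (SC) enters. Taking $s=t$ and $x_t=x\mathbf 1$ with $x>0$, the expected number of points whose value at time $t$ exceeds $x\mathbf 1$ equals $x^{-\alpha}E\big[\|\Theta\|_\alpha^{-\alpha}\sum_k \|\Theta_{t+k}\|_\infty^\alpha\big]$; reindexing and using the equivalence of norms on $\R^d$ bounds this by $C\,x^{-\alpha}E\big[\|\Theta\|_\alpha^{-\alpha}\sum_m \|\Theta_m\|^\alpha\big]=C\,x^{-\alpha}<\infty$. Hence $P(Z_t\le x\mathbf 1)=e^{-\mu}>0$ with $\mu\downarrow 0$ as $x\to\infty$, so $Z_t<\infty$ a.s.\ for each $t$ and therefore, over the countable index set, for all $t$ simultaneously.

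For the final assertion I would show multivariate regular variation of every finite-dimensional margin and compute the spectral tail process from the exponent measure. Since $(Z_s,\ldots,Z_t)$ is max-stable, its exponent measure $\nu_{s,t}$ is $(-\alpha)$-homogeneous, so $x^\alpha P\big((Z_s,\ldots,Z_t)/x \not\le z\big)=x^\alpha\big(1-e^{-x^{-\alpha}\nu_{s,t}(\{\,\cdot\,\not\le z\})}\big)\to \nu_{s,t}(\{\,\cdot\,\not\le z\})$, giving regular variation with limit measure
$$\nu_{s,t}(A) = E\left[\sum_{k \in \Z}\int_0^\infty \alpha u^{-\alpha-1}\,\mathds{1}\!\left\{\tfrac{u}{\|\Theta\|_\alpha}(\Theta_{s+k}, \ldots, \Theta_{t+k}) \in A\right\}du\right].$$
To extract the spectral tail process I would intersect with $\{\|y_0\|>1\}$, i.e.\ $\{u\|\Theta_k\|/\|\Theta\|_\alpha>1\}$, and substitute $v=u\|\Theta_k\|/\|\Theta\|_\alpha$: this turns $\alpha u^{-\alpha-1}du$ into $(\|\Theta_k\|^\alpha/\|\Theta\|_\alpha^\alpha)\,\alpha v^{-\alpha-1}dv$ and the contribution at time $t'$ into $v\,\Theta_{t'+k}/\|\Theta_k\|$. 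Taking $A$ to be the full space shows the total conditional mass is $E\big[\sum_k \|\Theta_k\|^\alpha/\|\Theta\|_\alpha^\alpha\big]=1$, so $\|Z_0\|$ is regularly varying of index $\alpha$; reading off the normalized limit, the tail process of $(Z_t)$ is distributed as $v\cdot(\Theta_{t+K}/\|\Theta_{K}\|)_{t}$ with $K=K(\Theta)$ having the conditional law \eqref{Eq:RSdef2} and $v$ an independent Pareto$(\alpha)$ radius. Since the value at time $0$ of $(\Theta_{t+K}/\|\Theta_K\|)_t$ has norm $\|\Theta_K\|/\|\Theta_K\|=1$, dividing by the norm at time $0$ removes the factor $v$ and identifies the spectral tail process as $(\Theta^{\mbox{\scriptsize RS}}_t)_{t\in\Z}$ of \eqref{Eq:RSdef1}.

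The main obstacle is the finiteness step together with the justification that the exponent functional $V$ is finite and genuinely controls the void probabilities: everything hinges on the bound $\sum_m\|\Theta_m\|_\infty^\alpha\le C\|\Theta\|_\alpha^\alpha$ supplied by (SC) and norm equivalence, and on Tonelli to interchange the sum over $k$ with the expectation. The change of variables $u\mapsto v$ in the last paragraph is the computational heart; it is precisely here that the mixing weights $\|\Theta_k\|^\alpha/\|\Theta\|_\alpha^\alpha$ and the independent Pareto radius emerge, reproducing exactly the random-shift representation of Theorem \ref{the:RS}.
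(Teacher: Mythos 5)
Your proposal is correct and follows essentially the same route as the paper's proof: the void-probability computation of the finite-dimensional exponent functional, homogeneity of degree $-\alpha$ for max-stability, shift invariance of the counting measure for stationarity, and the substitution $v=u\|\Theta_k\|/\|\Theta\|_\alpha$ on the set $\{\|y_0\|>1\}$ producing the mixing weights $\|\Theta_k\|^\alpha/\|\Theta\|_\alpha^\alpha$ and the independent Pareto radius, with the conclusion drawn via Remark \ref{rem:tailproc}. The only differences are cosmetic: you derive stationarity from the exponent functional rather than directly at the point-process level, and you make explicit the almost-sure finiteness bound that the paper leaves implicit in asserting that the distribution function is proper.
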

\begin{proof} 
The process defined in \eqref{Eq:max-stable} is stationary, since 
\begin{eqnarray*}(Z_{t+h})_{t \in \Z}&=& \left(\bigvee_{i \in \N} U_{i} 
\frac{\Theta^{(i)}_{t+h+T_i}}{\|(\Theta_l^{(i)})_{l \in \mathbb{Z}}\|_\alpha}  
\right)_{t \in \mathbb{Z}}\\
&\overset{d}{=}& \left(\bigvee_{i \in \N} U_{i} 
\frac{\Theta^{(i)}_{t+T_i}}{\|(\Theta_l^{(i)})_{l \in \mathbb{Z}}\|_\alpha}  
\right)_{t \in \mathbb{Z}}=(Z_t)_{t \in \Z}
\end{eqnarray*}
for all $h \in \Z$, because the point processes $(T_i)_{i \in \N}$ and 
$(T_i+h)_{i \in \N}$ have the same intensity $\lambda$ and are both independent 
of all other random variables. Similarly as in \cite{HaFe06}, Chapter 9, it 
follows that for $s \leq t, x_n=(x_n^1, \ldots, x_n^d) \in [0,\infty)^d, s \leq 
n \leq t$ (and with $[0,x_n]:=[0,x_n^1]\times \ldots \times [0,x_n^d]$)
\begin{eqnarray}
\nonumber && P(Z_s\leq x_s, \ldots, Z_t\leq x_t)\\
\nonumber &=&\exp\left(-\int \int \int \mathds{1}_{\{([0,x_s] \times \cdots 
\times [0,x_t])^c\}}\left(u\frac{(\theta_{n+z})_{s \leq n \leq 
t}}{\|\theta\|_\alpha}\right)\nu_{\alpha}(du) \lambda(dz)P^{(\Theta_t)_{t \in 
\Z}}(d\theta)\right) \\
\nonumber &=& \exp\left(-\int \|\theta\|_\alpha^{-\alpha} \sum_{z \in 
\mathbb{Z}} \left(\min_{\substack{1\leq i \leq d \\ s\leq n \leq 
t}}\frac{x_n^i}{\theta_{n+z}^i}\right)^{-\alpha}P^{(\Theta_t)_{t \in 
\Z}}(d\theta)\right)\\
\label{Eq:df}&=&  \exp\left(- \sum_{z \in \mathbb{Z}} E\left( \|(\Theta_t)_{t 
\in \Z}\|_\alpha^{-\alpha}\left(\max_{\substack{1\leq i \leq d \\ s\leq n \leq 
t}}\frac{\Theta_{n+z}^i}{x_n^i}\right)^{\alpha}\right)\right),
\end{eqnarray}
where $\nu_\alpha((x,\infty])=x^{-\alpha}, x>0$. This defines a proper distribution function and the process is thus almost 
surely finite. Furthermore, one easily sees that Equation \eqref{eq:defms} 
holds and that the process is therefore max-stable with index $\alpha$. 
To show the regular variation of the time series, let $s\leq 0 \leq t$ and $x_s 
\ldots, x_t$ with $(x_s, \ldots, x_t)\neq0$ be as above such that for $y>0$
\begin{eqnarray*}
&& y^{\alpha}P((Z_s/y, \ldots, Z_t/y) \in ([0,x_s]\times \cdots \times 
[0,x_t])^c)\\
&=& y^{\alpha}\left(1-\exp\left(-\int \int \int \mathds{1}_{\{([0,x_s] \times 
\cdots \times [0,x_t])^c\}}\left(u\frac{(\theta_{n+z})_{s \leq n \leq t}}{y 
\|\theta\|_\alpha}\right)\nu_{\alpha}(du) \lambda(dz)P^{(\Theta_t)_{t \in 
\Z}}(d\theta)\right)\right) \\
&=& y^{\alpha}\left(1-\exp\left(-\frac{1}{y^{\alpha}}\int \int \int 
\mathds{1}_{\{([0,x_s] \times \cdots \times 
[0,x_t])^c\}}\left(v\frac{(\theta_{n+z})_{s \leq n \leq 
t}}{\|\theta\|_\alpha}\right)\nu_{\alpha}(dv) \lambda(dz)P^{(\Theta_t)_{t \in 
\Z}}(d\theta)\right)\right) \\
&\to& \int \int \int \mathds{1}_{\{([0,x_s] \times \cdots \times 
[0,x_t])^c\}}\left(v\frac{(\theta_{n+z})_{s \leq n \leq 
t}}{\|\theta\|_\alpha}\right)\nu_{\alpha}(dv) \lambda(dz)P^{(\Theta_t)_{t \in 
\Z}}(d\theta), \;\;\; y \to \infty.
\end{eqnarray*}
Therefore, $(Z_s, \ldots, Z_t)$ is regularly varying with limit measure 
\begin{eqnarray}\nonumber \mu(A)&=& \int \int \int 
\mathds{1}_{A}\left(v\frac{(\theta_{n+z})_{s \leq n \leq 
t}}{\|\theta\|_\alpha}\right)\nu_{\alpha}(dv) \lambda(dz)P^{(\Theta_t)_{t \in 
\Z}}(d\theta)\\
\label{Eq:limitX}&=& \sum_{z \in \Z}\int \int_0^\infty  
\mathds{1}_{A}\left(v\frac{(\theta_{n+z})_{s \leq n \leq 
t}}{\|\theta\|_\alpha}\right)\nu_{\alpha}(dv) P^{(\Theta_t)_{t \in 
\Z}}(d\theta) \end{eqnarray}
for Borel sets $A\subset ([0,\infty)^d)^{t-s+1}$ bounded away from $0$. Thus, 
for a Borel set $A \subset ([0,\infty)^d)^{t-s+1}$ and $B=([0,\infty)^d)^{-s} 
\times \{(x_1, \ldots, x_d) \in [0,\infty)^d: \|(x_1, \ldots, x_d)\|>1\} \times 
([0,\infty)^d)^{t}$ with $\mu(\partial(A \cap B))=0$, we have
\begin{eqnarray*}
&& \lim_{x \to \infty} P\left(\left(\frac{Z_s}{x}, \ldots, \frac{Z_t}{x} 
\right) \in A \big| \|Z_0\|>x \right) \\
&=& \lim_{x \to \infty} \frac{P\left(\left(\frac{Z_s}{x}, \ldots, 
\frac{Z_t}{x}\right)  \in A \cap B\right) }{P\left(\left(\frac{Z_s}{x}, \ldots, 
\frac{Z_t}{x}\right)  \in B\right) } \\
&=& \frac{\mu(A \cap B)}{\mu(B)}.
\end{eqnarray*}
Using \eqref{Eq:limitX} and substituting $u_z=v\|\theta_z\|/\|\theta\|_\alpha, 
z \in \Z$, we get
\begin{eqnarray*}
\mu(A \cap B)&=& \sum_{z \in \Z}\int \int_0^\infty \mathds{1}_{A \cap 
B}\left(v\frac{(\theta_{n+z})_{s \leq n \leq 
t}}{\|\theta\|_\alpha}\right)\nu_{\alpha}(dv)P^{(\Theta_t)_{t \in 
\Z}}(d\theta)\\
&=& \sum_{z \in \Z}\int \int_0^\infty \mathds{1}_{A \cap 
B}\left(u_z\frac{(\theta_{n+z})_{s \leq n \leq 
t}}{\|\theta_z\|}\right)\frac{\|\theta_z\|^\alpha}{\|\theta\|_\alpha^\alpha}\nu_
{\alpha}(du_z)P^{(\Theta_t)_{t \in \Z}}(d\theta)\\
&=& \int  \sum_{z \in \Z} 
\int_1^\infty\mathds{1}_{A}\left(u_z\frac{(\theta_{n+z})_{s \leq n \leq 
t}}{\|\theta_z\|}\right)\nu_{\alpha}(du_z) 
\frac{\|\theta_z\|^\alpha}{\|\theta\|_\alpha^\alpha}P^{(\Theta_t)_{t \in 
\Z}}(d\theta),
\end{eqnarray*}
where we used for the substitution that, due to the definition of $B$, the 
integrand in the first line is 0 if $\|\theta_z\|=0$ and we used in the last 
equation that $u_z(\theta_{n+z})_{s \leq n \leq t}/\|\theta_z\| \in B$ if and 
only if $u_z \geq 1$. But the last expression is equal to
$$ P((Y \cdot \Theta_{s}^{\mbox{\scriptsize{RS}}}, \ldots, Y \cdot 
\Theta_t^{\mbox{\scriptsize{RS}}}) \in A)$$
for a random variable $Y$ with Pareto$(\alpha)$ distribution independent of 
$(\Theta_s^{\mbox{\scriptsize{RS}}}, \ldots, 
\Theta_t^{\mbox{\scriptsize{RS}}})$. Analogously, one can show that $\mu(B)=1$ 
and thus, by Remark \ref{rem:tailproc}, the process $(Z_t)_{t \in \Z}$ has 
spectral tail process $(\Theta_t^{\mbox{\scriptsize{RS}}})_{t \in \Z}$, which 
completes the proof.
\end{proof}
The following corollary addresses the same question as Theorem 5.1 in \cite{PlSo17} but gives an alternative construction of the underlying max-stable process.
\begin{corollary}\label{Cor:main}
If $(\Theta_t)_{t \in \Z}$ satisfies both Property (SC) and Property (TCF), 
then the spectral tail process of $(Z_t)_{t \in \Z}$ as defined in 
\eqref{Eq:max-stable} is given by $(\Theta_t)_{t \in \Z}$.
\end{corollary}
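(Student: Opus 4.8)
The plan is to combine the two main results already established, Theorem \ref{the:main} and Theorem \ref{the:RS}, since together they immediately pin down the spectral tail process of $(Z_t)_{t \in \Z}$. The hypothesis of the corollary is that $(\Theta_t)_{t \in \Z}$ satisfies both Property (SC) and Property (TCF), and each of the two theorems will consume one of these assumptions.

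First I would invoke Theorem \ref{the:main}, which requires only Property (SC). It asserts that the process $(Z_t)_{t \in \Z}$ constructed in \eqref{Eq:max-stable} is a stationary, almost surely finite, max-stable process with index $\alpha$ which is regularly varying, and whose spectral tail process equals the randomly shifted and re-scaled process $(\Theta_t^{\mbox{\scriptsize RS}})_{t \in \Z}$ from \eqref{Eq:RSdef1}--\eqref{Eq:RSdef2}. Thus the spectral tail process of $(Z_t)_{t \in \Z}$ is identified, but a priori only as $(\Theta_t^{\mbox{\scriptsize RS}})_{t \in \Z}$ rather than as the original process. The remaining task is to show that the random shift leaves the law unchanged.

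This is precisely where the second assumption enters. Since $(\Theta_t)_{t \in \Z}$ satisfies Property (SC), Theorem \ref{the:RS} applies and yields $(\Theta_t^{\mbox{\scriptsize RS}})_{t \in \Z} \overset{d}{=} (\Theta_t)_{t \in \Z}$ if and only if Property (TCF) holds. As Property (TCF) is assumed, we conclude $(\Theta_t^{\mbox{\scriptsize RS}})_{t \in \Z} \overset{d}{=} (\Theta_t)_{t \in \Z}$. Combining this with the identification from Theorem \ref{the:main}, the spectral tail process of $(Z_t)_{t \in \Z}$ is $(\Theta_t)_{t \in \Z}$, which is the claim; since the spectral tail process is only determined in distribution, equality in distribution is exactly what is required. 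There is no genuine obstacle here: the corollary is a direct synthesis of the preceding two theorems, with Property (TCF) serving precisely to upgrade the ``random-shift'' spectral tail process delivered by Theorem \ref{the:main} into the original process $(\Theta_t)_{t \in \Z}$ via the characterization of Theorem \ref{the:RS}.
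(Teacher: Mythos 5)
Your proposal is correct and follows exactly the paper's own argument: the paper proves the corollary in one line by combining Theorem \ref{the:main} (which identifies the spectral tail process of $(Z_t)_{t \in \Z}$ as $(\Theta_t^{\mbox{\scriptsize RS}})_{t \in \Z}$ under Property (SC)) with Theorem \ref{the:RS} (which under Property (TCF) gives $(\Theta_t^{\mbox{\scriptsize RS}})_{t \in \Z}\overset{d}{=}(\Theta_t)_{t \in \Z}$). You merely spell out the same synthesis in more detail, correctly noting that equality in distribution suffices since the spectral tail process is only determined in law.
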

\begin{proof}
 This follows immediately from Theorem \ref{the:main} in connection with 
Theorem \ref{the:RS}.
\end{proof}

\begin{remark}\label{rem:nonpos}
The construction of a max-stable process as in Theorem \ref{the:main} only 
works if all components of the spectral tail process are non-negative. To 
overcome this and to describe the general dependence structure between 
observations that are extremely large or small (i.e.\ smaller than $-c$ for 
$c\to \infty$) we can look instead at the $2d$-dimensional, non-negative 
process 
$(\Theta_t^{\pm})_{t \in \Z}:=((\Theta_t^1)_+,(\Theta_t^1)_-, \ldots,$ 
$(\Theta_t^d)_+,(\Theta_t^d)_-)_{t \in \Z}$ with $x_+=\max(x,0), 
x_-=\max(-x,0)$. 
\end{remark}
Next, we show that the max-stable process constructed in Theorem 
\ref{the:main} is actually the ``maximum attractor" (i.e.\ the limiting 
distribution for maxima) for the process underlying $(\Theta_t)_{t \in \Z}$. 
All maxima are meant to be taken componentwise.
\begin{proposition}\label{Prop:max-stable}
 Let $(X_t)_{t \in \Z}$ with values in $\R^d$ be a stationary regularly varying 
time series with index $\alpha>0$ and spectral tail process $(\Theta_t)_{t \in 
\mathbb{Z}}$ with values in $[0,\infty)^d$ that satisfies Property (SC). Then, 
for each $t \in \N_0,$ there exists a sequence $b_n>0, n \in \N,$ of 
normalizing constants such that 
 $$ \bigvee_{i=1}^n \frac{(X_0, \ldots, X_t)^i}{b_n} \overset{w}{\Rightarrow} 
(Z_0, \ldots, Z_t), \;\;\; n \to \infty,$$
 where $(X_0, \ldots, X_t)^i, i \in \N,$ are i.i.d.\ copies of $(X_0, \ldots, 
X_t)$ and $(Z_0, \ldots, Z_t)$ has the distribution as defined in Theorem 
\ref{the:main}. 
\end{proposition}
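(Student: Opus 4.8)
The plan is to read the assertion as a multivariate max-domain-of-attraction statement and to identify the limiting max-stable law with the distribution of $(Z_0,\ldots,Z_t)$ from Theorem \ref{the:main}. Fix $t\in\N_0$. By Remark \ref{rem:tailproc} the vector $(X_0,\ldots,X_t)$ is multivariate regularly varying with index $\alpha$, and since the spectral tail process takes values in $[0,\infty)^d$ its regular-variation limit measure $\mu_X$ is supported on $[0,\infty)^{(t+1)d}$, so that only the upper tails are relevant for componentwise maxima. Choosing $b_n$ with $nP(\|X_0\|>b_n)\to1$ (possible because $\|X_0\|$ has a regularly varying tail of index $-\alpha$), the vague convergence $nP\big((X_0,\ldots,X_t)/b_n\in\cdot\big)\to\mu_X$ on sets bounded away from $0$ applied to the set $\{y:y\not\le x\}$ gives, via the classical equivalence between multivariate regular variation and the max-domain of attraction (as in \cite{HaFe06}, Chapter 9),
$$\Big(P\big((X_0,\ldots,X_t)\le b_n x\big)\Big)^n\;\longrightarrow\;\exp\big(-\mu_X(\{y:y\not\le x\})\big)$$
for every $x=(x_0,\ldots,x_t)$ with all components in $(0,\infty]$ and $\mu_X(\partial\{y:y\not\le x\})=0$. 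Everything thus reduces to computing $\mu_X$.

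The claim is that $\mu_X$ coincides with the limit measure $\mu$ of $(Z_0,\ldots,Z_t)$ found in \eqref{Eq:limitX} (with $s=0$); comparing $\exp(-\mu(\{y:y\not\le x\}))$ with the distribution function \eqref{Eq:df} then identifies the limit as the law of $(Z_0,\ldots,Z_t)$ and proves the proposition. To establish $\mu_X=\mu$, observe that $X$ and $Z$ are both stationary regularly varying time series of index $\alpha$ sharing the \emph{same} spectral tail process $(\Theta_t)_{t\in\Z}$: for $X$ this is the hypothesis, and for $Z$ it follows from Corollary \ref{Cor:main}, which applies because $(\Theta_t)$, being the spectral tail process of a stationary process, satisfies Property (TCF) by \cite{BaSe09}, Theorem 3.1, while Property (SC) holds by assumption. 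In particular the tail processes of $X$ and $Z$ agree, and so do the parts of $\mu_X$ and $\mu$ anchored at time $0$: on $\{y:\|y_0\|>1\}$ both equal the law of $(Y\Theta_0,\ldots,Y\Theta_t)$ with $Y\sim\mathrm{Pareto}(\alpha)$ — for $\mu_X$ this is the defining convergence \eqref{convtotail} of the tail process under the normalization $nP(\|X_0\|>b_n)\to1$, and for $\mu$ it is the identity $\mu(A\cap B)=P((Y\Theta_0^{\mathrm{RS}},\ldots,Y\Theta_t^{\mathrm{RS}})\in A)$ from the proof of Theorem \ref{the:main} together with $\Theta^{\mathrm{RS}}\overset{d}{=}\Theta$. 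By the $(-\alpha)$-homogeneity of both measures this extends to agreement on all of $\{y:y_0\neq0\}$.

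The main obstacle is the complementary region $\{y:y_0=0,\ y\neq0\}$: these are the configurations whose extremal ``peak'' falls at a time in $\{1,\ldots,t\}$ rather than at $0$, and they correspond exactly to the shifts $z$ with $\theta_z=0$ in the sum \eqref{Eq:limitX}. To handle them I would decompose an extremal episode of $(X_0,\ldots,X_t)$ according to its anchoring (peak) time and use stationarity together with the time-change formula to transport each contribution to an event described through the spectral tail process, thereby reconstructing $\mu_X$ on $\{y_0=0\}$ from $(\Theta_t)$; this is precisely the bookkeeping over $z\in\Z$ in \eqref{Eq:limitX}, carried out in reverse. Equivalently, one may appeal to the fact that the spectral tail process determines the entire tail measure of the sequence (\cite{PlSo17}, \cite{DoHaSo17}), so that equality of the spectral tail processes of $X$ and $Z$ forces $\mu_X=\mu$ on every set bounded away from $0$.

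Once $\mu_X=\mu$ is established, it remains only to note that for Lebesgue-almost every threshold $x$ with positive components the boundary $\partial\{y:y\not\le x\}$ is $\mu_X$-null, so the displayed convergence holds on a set of continuity points that is dense enough to determine the (continuous) max-stable limit, which yields the asserted weak convergence to $(Z_0,\ldots,Z_t)$.
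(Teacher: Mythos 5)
Your proposal is correct in substance but follows a genuinely different route from the paper's proof. The paper argues softly: it invokes Proposition 7.1 of \cite{Re07} to get existence of \emph{some} max-stable limit $(U_0,\ldots,U_t)$ for the i.i.d.\ maxima of $(X_0,\ldots,X_t)$, notes (citing the proof of Theorem 2.1 in \cite{BaSe09}) that this limit is determined \emph{up to a constant} by the spectral tail process and $\alpha$, concludes that $(Z_0,\ldots,Z_t)$ — which has the same spectral tail process by Corollary \ref{Cor:main} — has the same maximum attractor, and then uses the max-stability relation $\bigvee_{i=1}^n (Z_0,\ldots,Z_t)^i/n^{1/\alpha}\overset{d}{=}(Z_0,\ldots,Z_t)$ together with the convergence-to-types theorem to identify $(U_0,\ldots,U_t)\overset{d}{=}c\,(Z_0,\ldots,Z_t)$ and absorb $c$ into $b_n$. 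You instead fix the normalization $nP(\|X_0\|>b_n)\to 1$ and prove the sharper statement of \emph{exact} equality $\mu_X=\mu$ of limit measures, matching the resulting exponent with the distribution function \eqref{Eq:df} directly; this dispenses with convergence-to-types entirely and makes the constant explicit, at the price of having to control the measures on the region $\{y: y_0=0,\ y\neq 0\}$, which the paper's proportionality-plus-types trick sidesteps. Note, though, that both proofs ultimately lean on the same nontrivial external fact — that stationarity plus the spectral tail process determines the full finite-dimensional limit measures, including their mass on $\{y_0=0\}$ — which the paper sources from the proof of Theorem 2.1 in \cite{BaSe09} and you source from \cite{PlSo17}/\cite{DoHaSo17} (or sketch via the anchoring decomposition). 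Your anchoring argument is only sketched, so as written it is no more self-contained at the crucial point than the paper's citation; if you wanted to carry it out, you would decompose $\{y\neq 0\}$ into the disjoint pieces $\{y_0=\cdots=y_{j-1}=0,\ y_j\neq 0\}$, $0\leq j\leq t$, use stationarity to express $\mu_X$ on each piece through the tail process anchored at time $j$ (which by the time-change formula is again determined by $(\Theta_t)_{t\in\Z}$), and check that this matches the $z$-th summands of \eqref{Eq:limitX} with $\theta_z=0$; the agreement of normalizations is guaranteed because $\mu(B)=1$ in the proof of Theorem \ref{the:main} corresponds exactly to your choice $nP(\|X_0\|>b_n)\to 1$. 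Your remaining steps — the homogeneity extension from $\{\|y_0\|>1\}$ to $\{y_0\neq 0\}$, the use of $\Theta^{\mbox{\scriptsize RS}}\overset{d}{=}\Theta$ from Theorem \ref{the:RS}, and the continuity-point argument for the weak convergence of maxima — are all sound.
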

\begin{proof}
Note first that since $(X_t)_{t \in \Z}$ is stationary, the process 
$(\Theta_t)_{t \in \Z}$ satisfies Property (TCF) and thus all assumptions of 
Theorem \ref{the:main} and Corollary \ref{Cor:main} are satisfied. The 
stationary time series $(X_t)_{t \in \Z}$ is multivariate regularly varying if 
and only if all $(X_0, \ldots, X_t), t \in \N_0,$ are multivariate regularly 
varying. By Proposition 7.1 in \cite{Re07} this is equivalent to the existence 
of a normalizing sequence $\tilde{b}_n>0, n \in \N,$ such that
$$ \bigvee_{i=1}^n \frac{(X_0, \ldots, X_t)^i}{\tilde{b}_n} 
\overset{w}{\Rightarrow} (U_0, \ldots, U_t), \;\;\; n \to \infty,$$
for some random vector $(U_0, \ldots, U_t)$ with a non-degenerate distribution. 
This distribution is (up to a scaling factor which only depends on 
$\tilde{b}_n, n \in \N)$ determined by the limit law of regular variation of 
$(X_0, \ldots, X_t)$ and this  is in turn (up to a constant) determined by the 
spectral tail process $(\Theta_t)_{t \in \Z}$ and the index $\alpha$ of regular 
variation, see the proof of Theorem 2.1 in \cite{BaSe09}. Therefore, since the 
processes $(Z_t)_{t \in \Z}$ from Theorem \ref{the:main} and $(X_t)_{t \in \Z}$ 
have the same spectral tail process and index of regular variation, there exist 
$\tilde{b}_n'>0, n \in \N,$ such that
$$ \bigvee_{i=1}^n \frac{(Z_0, \ldots, Z_t)^i}{\tilde{b}_n'} 
\overset{w}{\Rightarrow} (U_0, \ldots, U_t), \;\;\; n \to \infty,$$
where $(Z_0, \ldots, Z_t)^i, i \in \N,$ are i.i.d.\ copies of $(Z_0, \ldots, 
Z_t)$.
But we also know from Definition \ref{def:maxstable} and Theorem \ref{the:main} 
 that
$$ \bigvee_{i=1}^n \frac{(Z_0, \ldots, Z_t)^i}{n^{1/\alpha}} \overset{d}{=} 
(Z_0, \ldots, Z_t), $$
which implies by the convergence to types theorem that 
$$ (U_0, \ldots, U_t) \overset{d}{=} c\cdot (Z_0, \ldots, Z_t) $$
for some $c>0$. Setting $b_n=c\tilde{b}_n$ then leads to the result.
\end{proof}

Proposition \ref{Prop:max-stable} and Theorem \ref{the:main} combined with 
Corollary \ref{Cor:main} provide a view on the extremal behavior of a random 
process that is somehow complementary to the description given by the spectral 
tail process. The latter one only describes the behavior given that we have 
seen an extremal event \emph{at a specific time}. The process constructed in 
Theorem \ref{the:main} shares the extremal behavior of a process with given 
spectral tail process in the sense of Proposition \ref{Prop:max-stable} and 
thus 
gives us an impression about when the extremal events will happen over time. 
``Extremal episodes'' of the process in \eqref{Eq:max-stable} will typically 
occur for an extremely large value of $U_i$ and the corresponding 
``epicenters'' 
$T_i$ are uniformly distributed over time. Due to Property (SC), the influence 
of a large value of $U_i$ will only be visible in a certain neighborhood around 
the corresponding time $T_i$ and, roughly speaking, extremal events will become 
more and more independent if the elapsed time between them grows. 

From a statistical point of view, it is important to connect the extremal 
behavior that we observe in the ``extremal episodes" of a time series to the 
extremal behavior of the stationary distribution, i.e. the distribution at a 
fixed point in time (or maybe even the joint distribution at specific lags). 
The extremal behavior of the stationary distribution can be described by 
$\lim_{x \to \infty}\mathcal{L}(X_0/\|X_0\| \mid 
\|X_0\|>x)=\mathcal{L}(\Theta_0)$. Surely, the extremal realizations of a time 
series are the ones to look at for this task, but if we have an extremal 
cluster 
of events we look at the process at \emph{random} points in time. One may then 
ask which of those extremal observations from an observed cluster best 
represents a ``typical'' extremal observation, i.e.\ an extremal observation 
from the stationary distribution. This question is even more important if one 
wants to decrease dependence between used observations and therefore chooses 
only 
one observation per cluster for inference. This is for example done in the 
declustering approach of the POT method as introduced in \cite{DaSm90}, and has 
by now become a standard tool for the extremal analysis of time series. Here 
one 
chooses (in a univariate setting) the cluster maximum as a representative for 
the whole cluster and treats the resulting observations as extremal outcomes of 
the stationary distribution which are nearly i.i.d. As Proposition 
\ref{Prop:max-stable} shows, the extremal clusters can be seen as realizations 
of $(\Theta_t)_{t \in \Z}$ with random scaling and under an unobservable shift 
in time, $T_i$. So we will usually not observe $(\Theta_t)_{t \in \Z}$ but, 
with 
the notation from Section \ref{Sec:ShiftInv}, only 
$(\Theta_{T^\ast+t}/\|\Theta^\ast\|)_{t \in \Z}$, that is the observed ``pattern'' in form of the self-standardized 
process that has maximum norm 1 and is shifted in a way such that the maximal 
norm is first attained at time zero. Alternatively, the observation could also 
be seen as a realization in the quotient space of double-sided sequences with 
respect to the shift operator, cf.\ the space $\tilde{l}_0$ in \cite{BaPlSo16}. 
The conditional distribution of $(\Theta_t)_{t \in \Z}$, given the shifted and 
rescaled observation as just described, is found in the next proposition.
\begin{proposition}\label{prop:cond}
Let $(\Theta_t)_{t \in \Z}$ be a time series which satisfies Property (TCF) and 
Property (SC). Then, with the notation from \eqref{Eq:Thetastar}, 
\eqref{Eq:Tstar},
\begin{equation}\label{Eq:shiftcond} \mathcal{L}\left((\Theta_t)_{t \in 
\Z}\middle|\left(\frac{\Theta_{T^\ast+t}}{\|\Theta^\ast\|}\right)_{t \in 
\Z}\right)=\sum_{k\in \Z}\frac{\|\Theta_{T^\ast+k}\|^\alpha}{\|(\Theta_{t})_{t 
\in \Z}\|_\alpha^\alpha}\, 
\delta_{\left(\frac{\Theta_{T^\ast+k+t}}{\|\Theta_{T^\ast+k}\|}\right)_{t \in 
\Z}}, 
\end{equation}
where $\delta_x$ denotes the Dirac measure in $x \in (\R^d)^{\Z}$.
\end{proposition}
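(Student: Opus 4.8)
The plan is to read off the right-hand side of \eqref{Eq:shiftcond} as the regular conditional distribution of $(\Theta_t)_{t\in\Z}$ given the observed ``pattern''
\[
\Psi:=\left(\frac{\Theta_{T^\ast+t}}{\|\Theta^\ast\|}\right)_{t\in\Z},
\]
which is well defined almost surely: by Property (SC) and Lemma \ref{equivalence} we have $T^\ast\in\Z$ a.s., and $\|\Theta_0\|=1$ a.s.\ by Property (TCF). Writing $\Psi(\theta)$ for the pattern of a generic path $\theta$ and $S_k(\theta):=(\theta_{t+k}/\|\theta_k\|)_{t\in\Z}$ for the shift-and-rescale map (with the convention of Remark \ref{rem:MRTSrep}(i) that the expression is $0$ where $\|\theta_k\|=0$), I would show that for every bounded measurable $g:(\R^d)^\Z\to\R$,
\[
E\big(g((\Theta_t)_{t\in\Z})\,\big|\,\Psi\big)=\sum_{k\in\Z}\frac{\|\Theta_{T^\ast+k}\|^\alpha}{\|\Theta\|_\alpha^\alpha}\,g\!\left(\left(\frac{\Theta_{T^\ast+k+t}}{\|\Theta_{T^\ast+k}\|}\right)_{t\in\Z}\right);
\]
since the appearing weights are nonnegative and sum to one, this is precisely \eqref{Eq:shiftcond}.

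Two ingredients feed into this. First, a purely deterministic invariance: the pattern is unchanged under shift-and-rescale, i.e.\ $\Psi(S_k(\theta))=\Psi(\theta)$ whenever $\|\theta_k\|>0$, because passing from $\theta$ to $S_k(\theta)$ moves the location of the maximal norm from $T^\ast$ to $T^\ast-k$ and multiplies all norms by $\|\theta_k\|^{-1}$, so that the self-standardized, re-shifted sequence is left invariant. Second, Theorem \ref{the:RS}: since Properties (TCF) and (SC) hold, $(\Theta_t)_{t\in\Z}\overset{d}{=}(\Theta_t^{\mbox{\scriptsize RS}})_{t\in\Z}$, where the latter, abbreviated $\Theta^{\mbox{\scriptsize RS}}$, equals $S_{K(\Theta)}(\Theta)$ with $K(\Theta)$ having conditional mass function $\|\Theta_k\|^\alpha/\|\Theta\|_\alpha^\alpha$. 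Combining the two, for bounded measurable $g$ and arbitrary bounded measurable $h$ on the pattern space,
\begin{align*}
E\big(g((\Theta_t)_t)\,h(\Psi)\big)
&=E\big(g(\Theta^{\mbox{\scriptsize RS}})\,h(\Psi(\Theta^{\mbox{\scriptsize RS}}))\big)
=E\big(g(\Theta^{\mbox{\scriptsize RS}})\,h(\Psi)\big)\\
&=E\!\left(h(\Psi)\sum_{k\in\Z}\frac{\|\Theta_k\|^\alpha}{\|\Theta\|_\alpha^\alpha}\,g(S_k(\Theta))\right),
\end{align*}
where the first equality uses the distributional identity of Theorem \ref{the:RS}, the second the pattern invariance, and the third unfolds $\Theta^{\mbox{\scriptsize RS}}=S_{K(\Theta)}(\Theta)$ and conditions on $(\Theta_t)_t$ (the interchange of sum and expectation being legitimate since $g,h$ are bounded and the weights sum to one under (SC)).

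It remains to recognize the inner sum as a $\Psi$-measurable object equal to the claimed expression, and this is the main obstacle. Substituting $\Theta_{T^\ast+s}=\|\Theta^\ast\|\,\Psi_s$ gives $\|\Theta_k\|^\alpha/\|\Theta\|_\alpha^\alpha=\|\Psi_{k-T^\ast}\|^\alpha/\|\Psi\|_\alpha^\alpha$ and $S_k(\Theta)=(\Psi_{t+k-T^\ast}/\|\Psi_{k-T^\ast}\|)_t$, so after the index change $j=k-T^\ast$ the inner sum becomes $\sum_{j\in\Z}(\|\Psi_j\|^\alpha/\|\Psi\|_\alpha^\alpha)\,g((\Psi_{t+j}/\|\Psi_j\|)_t)$, which depends on $\Psi$ alone; rewriting it back through the parametrization by $T^\ast+k$ reproduces exactly the right-hand side of the first display. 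Because $h$ ranges over all bounded measurable functions of $\Psi$, this identity identifies the conditional expectation, hence the conditional law, which is \eqref{Eq:shiftcond}. The delicate part is precisely this reindexing, which converts a sum whose individual summands are not $\Psi$-measurable (they involve $T^\ast$) into a $\Psi$-measurable total; the terms with $\|\Theta_k\|=0$ contribute $0$ and can be ignored throughout.
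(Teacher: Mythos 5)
Your proof is correct and takes essentially the same route as the paper's: both arguments rest on Theorem \ref{the:RS} (realized via the explicit construction $\Theta^{\mbox{\scriptsize RS}}_t=\Theta_{t+K(\Theta)}/\|\Theta_{K(\Theta)}\|$), the pathwise invariance of the pattern $\left(\Theta_{T^\ast+t}/\|\Theta^\ast\|\right)_{t\in\Z}$ under the shift-and-rescale map, and the reindexing $l=T^\ast+k$ that turns the weighted sum into a measurable function of the pattern. The only differences are presentational: you work with general bounded measurable test functions $g,h$ and state the pattern invariance as an explicit lemma, whereas the paper uses indicator functions $\mathds{1}_A,\mathds{1}_B$ and absorbs the invariance into one displayed computation.
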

\begin{proof}
Note first that the random probability measure on the r.h.s.\ in 
\eqref{Eq:shiftcond}, applied to some $A\subset \mathcal{B}((\R^d)^\Z)$, is 
equal to
\begin{eqnarray*}
&& \sum_{k\in \Z}\frac{\|\Theta_{T^\ast+k}\|^\alpha}{\|(\Theta_{t})_{t \in 
\Z}\|_\alpha^\alpha}\, 
\mathds{1}_A\left(\left(\frac{\Theta_{T^\ast+k+t}}{\|\Theta_{T^\ast+k}\|}
\right)_{t \in \Z}\right)\\
&=& \sum_{k\in 
\Z}\frac{\|\Theta_{T^\ast+k}/\|\Theta^\ast\|\|^\alpha}{\|(\Theta_{T^\ast+t})_{t 
\in \Z}/\|\Theta^\ast\|\|_\alpha^\alpha}\, 
\mathds{1}_A\left(\left(\frac{\Theta_{T^\ast+k+t}/\|\Theta^\ast\|}{\|\Theta_{
T^\ast+k}/\|\Theta^\ast\|\|}\right)_{t \in \Z}\right)
\end{eqnarray*}
and is thus a measurable function of the conditioning expression. Let now $A, B 
\subset \mathcal{B}((\R^d)^\Z)$. Then, from Theorem \ref{the:RS}, 
\begin{eqnarray*}
&& P\left((\Theta_t)_{t \in \Z} \in A, 
\left(\frac{\Theta_{T^\ast+t}}{\|\Theta^\ast\|}\right)_{t \in \Z} \in B\right)\\
&=&E\left(\sum_{l \in \Z} \frac{\|\Theta_l\|^\alpha}{\|(\Theta_t)_{t \in 
\Z}\|_\alpha^\alpha}\mathds{1}_A\left(\left(\frac{\Theta_{l+t}}{\|\Theta_l\|}
\right)_{t \in 
\Z}\right)\mathds{1}_B\left(\left(\frac{\Theta_{T^\ast((\Theta_{l+t})_{t \in 
\Z})+l+t}}{\|\Theta^\ast\|((\Theta_{l+t})_{t \in \Z})}\right)_{t \in 
\Z}\right)\right)\\
&=& E\left(\sum_{l \in \Z} \frac{\|\Theta_l\|^\alpha}{\|(\Theta_t)_{t \in 
\Z}\|_\alpha^\alpha}\mathds{1}_A\left(\left(\frac{\Theta_{l+t}}{\|\Theta_l\|}
\right)_{t \in 
\Z}\right)\mathds{1}_B\left(\left(\frac{\Theta_{T^\ast+t}}{\|\Theta^\ast\|}
\right)_{t \in \Z}\right)\right)\\
&=& E\left(\sum_{k \in \Z} \frac{\|\Theta_{T^\ast+k}\|^\alpha}{\|(\Theta_t)_{t 
\in 
\Z}\|_\alpha^\alpha}\mathds{1}_A\left(\left(\frac{\Theta_{T^\ast+k+t}}{\|\Theta_
{T^\ast+k}\|}\right)_{t \in 
\Z}\right)\mathds{1}_B\left(\left(\frac{\Theta_{T^\ast+t}}{\|\Theta^\ast\|}
\right)_{t \in \Z}\right)\right),
\end{eqnarray*}
which finishes the proof.
\end{proof}
A way of interpreting Proposition \ref{prop:cond} is that given an observed extremal ``pattern'' $(\Theta_{T^\ast+t}/\|\Theta^\ast\|)_{t \in \Z}$ from the realization of a time series, the resulting conditional distribution of the underlying spectral tail process is a random shift of this pattern (where the probability that an observation is set to be at time 0 is proportional to the norm of that observation to the power of $\alpha$), and scaled in a way to ensure that $\|\Theta_0\|=1$. Furthermore, a closer look at \eqref{Eq:shiftcond} shows that given an observation 
of $(\Theta_{T^\ast+t}/\|\Theta^\ast\|)_{t \in \Z}$ (and if the maximum norm is 
attained only once) this very sequence itself is also the most likely of all 
possible underlying sequences $(\Theta_t)_{t \in \Z}$, because the weight in 
\eqref{Eq:shiftcond} is largest for $k=0$. Therefore, $\Theta_{T^\ast}/\|\Theta^\ast\|$ can be seen as the best approximation of $\Theta_0$ from the observed sequence, which makes the representational choice of the observation with maximal norm from a cluster as in the POT method reasonable.

\section{Construction of a max-stable process with given spectral tail process 
in the general case}\label{Sec:general}
So far, we have focussed on processes $(\Theta_t)_{t \in \Z}$ which satisfy 
both Property (TCF) and Property (SC). Corollary \ref{Cor:main} shows that this 
ensures the existence of a max-stable underlying process which realizes 
$(\Theta_t)_{t \in \Z}$ as a spectral tail process. In fact, this max-stable 
process is of a specific form which is called a \emph{mixed moving maxima (M3) 
process}, cf.\ for example Definition 7 in \cite{DoKa16} for the univariate 
case where $\alpha=1$. \cite{DoKa16} show that a (univariate) max-stable 
process 
has a representation as a M3 process of the above form if and only if the 
process is (purely) dissipative, cf.\ Theorem 8 in \cite{DoKa16} and Theorem 
5.4 
in \cite{WaSt10}, where also the case for general $\alpha$ is covered 
explicitly. Intuitively, this case implies that the impact of an extremal event 
at time 0 may in principle last forever (since $\|\Theta_t\|>0$ for all $t 
\in \Z$ is possible) but that it diminishes over time since $\|\Theta_t\| \to 
0$ 
as $|t|\to \infty$ almost surely. 

We shall now look at the case where $(\Theta_t)_{t \in \Z}$ satisfies Property 
(TCF) but not necessarily Property (SC) for a given value of $\alpha>0$. If 
Property (SC) is not satisfied, then this implies by Lemma \ref{equivalence} 
that $P(T^\ast \notin \Z)>0$ and, roughly speaking, this corresponds to the 
case where an extremal event at time 0 will actually ``return'' infinitely 
often, for example in a periodic manner, like the following example shows.
\begin{example}[Simple periodic spectral tail process]\label{ex:simpex}
Let $d=1$ and 
$$ \Theta_t=\begin{cases}
            1 & \mbox{ if } t \in 2\Z, \\
            0 & \mbox{ if } t \in 2\Z+1.
            \end{cases}$$
One easily checks that the process $(\Theta_t)_{t \in \Z}$ satisfies Property 
(TCF) for any $\alpha>0$ but there exists no $\alpha>0$ such that 
$(\Theta_t)_{t \in \Z}$ also satisfies Property (SC). 
Define now
$$ (X_t)_{t \in \mathbb{Z}}=\left(\bigvee_{j=0}^1 \bigvee_{i \in \N} U_{i}^{(j)} \Theta^{(j,i)}_{t+j} \right)_{t \in \mathbb{Z}},$$
where $(U_i^{(j)}, (\Theta_t^{(j,i)})_{t \in \mathbb{Z}})_{i \in \mathbb{N}}$ for $j=0,1$ is an enumeration of points from a Poisson point process on $(0,\infty) \times  
([0,\infty))^{\Z}$ with intensity $\alpha u^{-\alpha-1} du \otimes 
P^{(\Theta_t)_{t \in \Z}}(d\theta)$ and let those two Poisson point processes be 
independent for $j=0$ and $j=1$. It is easily seen that $(X_t)_{t \in \Z}$ is a stationary max-stable process and that
$$ (X_t)_{t \in \mathbb{Z}}\overset{d}{=} (Z_0\Theta_t\mathds{1}_{2 \mathbb{Z}}(t)+Z_1\Theta_{t+1}\mathds{1}_{2 \mathbb{Z}+1}(t))_{t \in \mathbb{Z}}=(Z_0\mathds{1}_{2 \mathbb{Z}}(t)+Z_1\mathds{1}_{2 \mathbb{Z}+1}(t))_{t \in \mathbb{Z}}$$
for $Z_0, Z_1$ being independent and Fr\'{e}chet($\alpha$)-distributed. Therefore, the 
resulting spectral tail process of $(X_t)_{t \in \Z}$ is given by 
$(\Theta_t)_{t 
\in \Z}$. 
\end{example}
The following theorem shows how the construction principle from the above 
example can be generalized in order to construct a corresponding stationary 
max-stable process for a general process $(\Theta_t)_{t \in \Z}$ which 
satisfies Property (TCF). We restrict ourselves again to the non-negative case, 
but note that an analogue of Remark \ref{rem:nonpos} holds in this case as 
well. See \cite{DoHaSo17}, Theorem 2.9, for a different approach that shows the existence of an underlying process based on the concept of the tail measure.  
 
\begin{theorem}\label{the:general}
Let $(\Theta_t)_{t \in \Z}$ with values in $[0,\infty)^d$ be a stochastic 
process which satisfies Property (TCF). Furthermore, for $j \in \Z$ let 
$(U_i^{(j)}, (\Theta_t^{(j,i)})_{t \in \mathbb{Z}})_{i \in \mathbb{N}}$ be an 
enumeration of points from a Poisson point process on $(0,\infty) \times  
([0,\infty)^d)^{\Z}$ with intensity $\alpha u^{-\alpha-1} du \otimes 
P^{(\Theta_t)_{t \in \Z}}(d\theta)$ and let those Poisson point processes be 
independent for different values of $j$. Define for $k>0$ the sets
$$ Q_{k}=\{(\theta_t)_{t \in \mathbb{Z}}: \theta_0\neq 0, 
\theta_1=\theta_2=\ldots=\theta_{2k}=0\},$$
$$ \; Q_{-k}=\{(\theta_t)_{t \in 
\mathbb{Z}}:\theta_{-2k+1}=\theta_{-2k+2}=\ldots=\theta_{-1}=0, \theta_0\neq 
0\}$$
and set $Q_0=\{(\theta_t)_{t \in \mathbb{Z}}: \theta_0 \neq 0\}$.
Then the stochastic process
\begin{equation}\label{Eq:general} (Z_t)_{t \in \Z}=\left(\bigvee_{j \in 
\mathbb{Z}} \bigvee_{i \in \N} U_{i}^{(j)} \mathds{1}_{\{(\Theta^{(j,i)}_t)_{t 
\in \mathbb{Z}} \in Q_{j}\}}\Theta^{(j,i)}_{t+j} \right)_{t \in \Z} 
\end{equation}
is an almost surely finite, stationary and max-stable process with index 
$\alpha$. The process is furthermore regularly varying with corresponding 
spectral tail process 
$(\Theta_t)_{t \in \Z}$.
\end{theorem}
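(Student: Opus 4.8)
The plan is to read off the finite–dimensional distributions of $(Z_t)_{t\in\Z}$ from the exponent measure of the superposition of the independent Poisson point processes, and then to verify every asserted property from this explicit form. Writing $\sigma^j\theta:=(\theta_{t+j})_{t\in\Z}$ for the shift and letting $\mu$ denote the exponent measure on $([0,\infty)^d)^{\Z}\setminus\{0\}$, independence across $j$ makes the exponent measures add, so that
\[
\mu=\sum_{j\in\Z}\int_{([0,\infty)^d)^\Z}\int_0^\infty \delta_{u\,\sigma^j\theta}\,\nu_\alpha(du)\,\mathds{1}_{Q_j}(\theta)\,P^{(\Theta_t)_{t\in\Z}}(d\theta),
\]
with $\nu_\alpha((x,\infty])=x^{-\alpha}$ as in Theorem \ref{the:main}. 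Exactly as in \eqref{Eq:df} (see also \cite{HaFe06}, Ch.~9), the void–probability computation gives, for $s\le t$ and $x_s,\dots,x_t\in[0,\infty)^d$,
\[
P(Z_s\le x_s,\dots,Z_t\le x_t)=\exp\Big(-\sum_{j\in\Z}E\Big[\mathds{1}_{Q_j}(\Theta)\big(\max_{\substack{1\le i\le d\\ s\le n\le t}}\Theta_{n+j}^i/x_n^i\big)^\alpha\Big]\Big).
\]
Since the exponent is homogeneous of degree $-\alpha$ in $(x_s,\dots,x_t)$, identity \eqref{eq:defms} holds and the process is max–stable with index $\alpha$; it remains to check finiteness, stationarity and the spectral tail process.

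Two structural facts drive everything. First, a combinatorial \emph{anchoring} property of the sets $Q_j$: for every $\omega\neq0$ there is a unique $j=j(\omega)\in\Z$ with $\sigma^{-j}\omega\in Q_j$, namely the index that places at time $0$ the nonzero coordinate of $\omega$ closest to the origin, ties being broken to the right. This is where the deliberate asymmetry between the $2k$ forced zeros in $Q_k$ and the $2k-1$ forced zeros in $Q_{-k}$ is used, and it is verified by elementary case distinctions on the position of the nearest nonzero coordinate; equivalently $(j,\theta)\mapsto\sigma^j\theta$ is a bijection of $\bigsqcup_{j}(\{j\}\times Q_j)$ onto $([0,\infty)^d)^{\Z}\setminus\{0\}$, and in particular $\sum_{j}\mathds{1}_{Q_j}(\sigma^{-j}\omega)=1$ for all $\omega\neq0$. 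Second, since $\|\Theta_0\|=1$ a.s., choosing $f=\mathds{1}\{\theta_0\neq0\}$ in \eqref{Eq:TCFfull} yields $E\|\Theta_i\|^\alpha=P(\Theta_{-i}\neq0)\le1$ for all $i$. For finiteness I note that in the exponent above only finitely many $j$ contribute to a fixed window: for $j=k$ large the constraint $\Theta_1=\dots=\Theta_{2k}=0$ in $Q_k$ forces $\Theta_{n+k}=0$ for all $n\in[s,t]$, and symmetrically for $j=-k$, so the corresponding summands vanish; each of the remaining finitely many terms is bounded by $C\sum_{n=s}^tE\|\Theta_{n+j}\|^\alpha<\infty$. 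Hence the exponent is finite and $(Z_t)$ is almost surely finite.

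The heart of the argument is a single computation, carried out for every $m\in\Z$, that re-anchors the $j$-th summand at time $0$ by means of the time–change formula. Fixing $m$ and a Borel set $A$, on $\{\|\omega_m\|>1\}$ the $m$-th coordinate of $u\,\sigma^j\theta$ is $u\theta_{m+j}$; substituting $v=u\|\theta_{m+j}\|$ turns the $j$-th contribution to $\mu(A\cap\{\|\omega_m\|>1\})$ into $E[\mathds{1}_{Q_j}(\Theta)\|\Theta_{m+j}\|^\alpha\,h_A(\sigma^j(\Theta/\|\Theta_{m+j}\|))]$, where $h_A(\omega)=\int_1^\infty\mathds{1}_A(v\omega)\,\nu_\alpha(dv)$ and I use that membership in $Q_j$ is scale invariant. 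Applying \eqref{Eq:TCFfull} with $i=m+j$ to $\psi\mapsto\mathds{1}_{Q_j}(\psi)\,h_A(\sigma^j\psi)$ (which vanishes when $\psi_0=0$) replaces this by $E[\mathds{1}_{Q_j}(\sigma^{-(m+j)}\Theta)\,h_A(\sigma^{-m}\Theta)]$, and summing over $j$ together with the anchoring identity $\sum_{j}\mathds{1}_{Q_j}(\sigma^{-j}(\sigma^{-m}\Theta))=1$ collapses the sum to
\[
\mu\big(A\cap\{\|\omega_m\|>1\}\big)=E\big[h_A(\sigma^{-m}\Theta)\big]=P\big(Y\cdot(\Theta_{t-m})_{t\in\Z}\in A\big),
\]
with $Y$ Pareto$(\alpha)$ independent of $\Theta$. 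For $m=0$ this reads $\mu(\,\cdot\,\cap\{\|\omega_0\|>1\})=\mathcal{L}(Y\Theta)$ and $\mu(\{\|\omega_0\|>1\})=1$, so exactly as at the end of the proof of Theorem \ref{the:main} the ratio $\mu(A\cap\{\|\omega_0\|>1\})/\mu(\{\|\omega_0\|>1\})$ equals $P(Y\Theta\in A)$; by Remark \ref{rem:tailproc} the tail process of $(Z_t)$ is $Y\Theta$ and its spectral tail process is $(\Theta_t)_{t\in\Z}$.

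Finally, stationarity follows from the same family of identities. Writing $\tau$ for the unit shift $(\tau\omega)_t=\omega_{t+1}$, the relation $\tau(\Theta_{t-(m+1)})_t=(\Theta_{t-m})_t$ combined with the displayed formula gives $\mu(\tau^{-1}(A\cap\{\|\omega_m\|>1\}))=\mu(A\cap\{\|\omega_m\|>1\})$ for every $m$ and $A$ (and, by homogeneity, with any threshold $c$ in place of $1$); decomposing a finite–window event $\{\omega_n\not\le x_n\text{ for some }n\in[s,t]\}$ by inclusion–exclusion into intersections, each contained in some $\{\|\omega_m\|>c\}$, then shows that all finite–dimensional distributions are shift invariant, i.e.\ $(Z_t)$ is stationary. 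The main obstacle is precisely this passage from Property (TCF) to shift invariance \emph{without} Property (SC): one cannot normalize by $\|\Theta\|_\alpha$ as in Theorem \ref{the:main}, and the role of that normalization is taken over by the anchoring lemma together with the re-anchoring step above; checking the tie-breaking in the anchoring lemma and the bookkeeping of the collapsing sum is where the care is needed.
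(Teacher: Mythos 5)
Your proposal is correct, and its skeleton coincides with the paper's: the superposed Poisson point processes yield the additive exponent measure as in \eqref{Eq:generalexp}, max-stability \eqref{eq:defms} comes from $(-\alpha)$-homogeneity, a.s.\ finiteness from the fact that only finitely many $Q_j$-constrained summands can charge a fixed window (your explicit bound $E\|\Theta_i\|^\alpha=P(\Theta_{-i}\neq 0)\le 1$ makes precise what the paper leaves implicit), and the crucial combinatorial input is the same in both proofs --- your ``anchoring lemma'' $\sum_{j}\mathds{1}_{Q_j}(\sigma^{-j}\omega)=1$ for $\omega\neq 0$ is exactly the paper's list of disjoint events whose indicators sum to $\mathds{1}_{\{(\theta_t)_{t\in\Z}\neq 0\}}$, and your substitution $v=u\|\theta_{m+j}\|$ followed by \eqref{Eq:TCFfull} is the paper's central manipulation. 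Where you genuinely deviate is in the bookkeeping: you prove a single master identity $\mu(A\cap\{\|\omega_m\|>1\})=P(Y\,(\Theta_{t-m})_{t\in\Z}\in A)$ for \emph{every} $m$, and then read off both stationarity (via inclusion--exclusion over the sets $\{\|\omega_m\|>c\}$) and the tail process (the case $m=0$) from it; the paper instead proves shift invariance of the exponent measure on windows, \eqref{Eq:equationintensity}, directly, reducing by an induction over the window length to sets $A$ with $A\cap\{x_0=0\}=\emptyset$, and identifies the spectral tail process \emph{without} invoking Property (TCF) at that stage, using only that $(\theta_t)_{t\in\Z}\in Q_j$ forces $\theta_j=0$ for $j\neq 0$, so that only the $j=0$ term survives on $\{\|\omega_0\|>1\}$. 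Your route is more unified and the master identity is a genuinely stronger statement (it exhibits every anchored restriction of the exponent measure as a shifted tail measure, which is in the spirit of the tail-measure approach of \cite{PlSo17} and \cite{DoHaSo17}); the paper's route is more economical for the tail-process identification. One small point to patch in your stationarity step: the inclusion--exclusion terms $\{\omega_n\not\le x_n\}$ are contained in some $\{\|\omega_m\|>c\}$ with $c>0$ only when all components $x_n^i$ are strictly positive, so you should first establish equality of the distribution functions at strictly positive arguments (where also $\mu(\{\|\omega_m\|>c\})=c^{-\alpha}<\infty$ justifies the inclusion--exclusion) and then recover arguments with zero components by right-continuity of distribution functions; this is routine and does not affect the validity of the argument.
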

\begin{proof} 
Note first that for all $n, k \in 
\mathbb{N}_0$ and $x=(x_0, \ldots, x_n) \in ([0,\infty)^d)^{n+1}$ we have, analogously to the proof of Theorem \ref{the:main}, that
\begin{eqnarray}&&\nonumber P(Z_{-k}\leq x_0, \ldots, Z_{n-k} \leq x_n)\\
&=&\nonumber \exp\left(-\sum_{j \in \mathbb{Z}}\int \int 
\mathds{1}_{[0,x]^c}\left(y(\theta_{t+j})_{-k \leq t \leq 
n-k}\mathds{1}_{\{(\theta_t)_{t \in \mathbb{Z}} \in 
Q_{j}\}}\right)\nu_\alpha(dy)P^{(\Theta)_{t \in \Z}}(d\theta)\right)\\
&=&\label{Eq:generalexp} \exp\left(-\sum_{j \in \mathbb{Z}}\int \int 
\mathds{1}_{[0,x]^c}\left(y(\theta_{t+j-k})_{0 \leq t \leq 
n}\mathds{1}_{\{(\theta_t)_{t \in \mathbb{Z}} \in 
Q_{j}\}}\right)\nu_\alpha(dy)P^{(\Theta)_{t \in \Z}}(d\theta)\right),
\end{eqnarray} 
where again $\nu_{\alpha}((x,\infty])=x^{-\alpha}, x>0$. Due to the definition of the sets $Q_j$, only finitely many summands in the exponent in \eqref{Eq:generalexp} are different from 0. Therefore, if all components of $x_0, \ldots, x_n$ go to infinity, the expression \eqref{Eq:generalexp} converges to 1, 
which shows that $(Z_t)_{t \in \Z}$ is an almost surely finite process.

The stationarity now follows from \eqref{Eq:generalexp} if we can show that
\begin{eqnarray}
 \nonumber && \sum_{j \in \mathbb{Z}}\int \int 
\mathds{1}_{A}\left(y(\theta_{t+j-k})_{0 \leq t \leq 
n}\mathds{1}_{\{(\theta_t)_{t \in \mathbb{Z}} \in 
Q_{j}\}}\right)\nu_\alpha(dy)P^{(\Theta)_{t \in \Z}}(d\theta) \\
 \label{Eq:equationintensity} &=& \sum_{j \in \mathbb{Z}}\int \int 
\mathds{1}_{A}\left(y(\theta_{t+j})_{0 \leq t \leq 
n}\mathds{1}_{\{(\theta_t)_{t \in \mathbb{Z}} \in 
Q_{j}\}}\right)\nu_\alpha(dy)P^{(\Theta)_{t \in \Z}}(d\theta).
\end{eqnarray}
for all $k, n \in \mathbb{N}_0$ and all Borel sets $A \subset ([0,\infty)^d)^{n+1}$ which 
are bounded away from 0.
Note that \eqref{Eq:equationintensity} follows for all $A$ bounded away from 0 
as soon as we can show it for all $A$ such that $A \cap \{(x_0, \ldots, x_n): 
x_0=0\}=\emptyset$, which can be seen as follows: For $n=0$ all sets $A$ which are 
bounded away from 0 already satisfy $A \cap \{(x_0): x_0=0\}=\emptyset$. For 
$n\geq 1$ any $A$ bounded away from 0 is the disjoint union of the sets
$$ A\cap \{(x_0, \ldots, x_n):x_0\neq 0\})$$
and
\begin{eqnarray*} A\cap \{(x_0, \ldots, x_n):x_0=0\})&=&\{(x_0, x_1, \ldots, 
x_n):(x_1, \ldots, x_n) \in A'\})\\
&& \setminus \{(x_0, x_1, \ldots, x_n):(x_1, \ldots, x_n) \in A', x_0\neq 0\})
\end{eqnarray*}
for a set $A'\subset ([0,\infty)^d)^{n}$ bounded away from 0. By induction, the 
statement then follows.

So assume in the following that $A \cap \{(x_0, \ldots, x_n): 
x_0=0\}=\emptyset$, such that
\begin{eqnarray*}
 && \sum_{j \in \mathbb{Z}}\int \int \mathds{1}_{A}\left(y(\theta_{t+j-k})_{0 
\leq t \leq n}\mathds{1}_{\{(\theta_t)_{t \in \mathbb{Z}} \in 
Q_{j}\}}\right)\nu_\alpha(dy)P^{(\Theta)_{t \in \Z}}(d\theta) \\
 &=& \sum_{j \in \mathbb{Z}}\int \int \mathds{1}_{A}\left(y(\theta_{t+j-k})_{0 
\leq t \leq n}\mathds{1}_{\{(\theta_t)_{t \in \mathbb{Z}} \in 
Q_{j}\}}\right)\mathds{1}_{\{\theta_{j-k}\neq 0\}}\nu_\alpha(dy)P^{(\Theta)_{t 
\in \Z}}(d\theta)\\
 &=& \sum_{j \in \mathbb{Z}}E\left( \int 
\mathds{1}_{A}\left(y(\Theta_{t+j-k})_{0 \leq t \leq 
n}\mathds{1}_{\{(\Theta_t)_{t \in \mathbb{Z}} \in 
Q_{j}\}}\right)\mathds{1}_{\{\Theta_{j-k}\neq 0\}}\right)\nu_\alpha(dy) \\
 &=& \sum_{j \in \mathbb{Z}}E\left( \int 
\mathds{1}_{A}\left(y\left(\frac{\Theta_{t}}{\|\Theta_{k-j}\|}\right)_{0 \leq t 
\leq 
n}\mathds{1}_{\left\{\left(\frac{\Theta_{t+k-j}}{\|\Theta_{k-j}\|}\right)_{t 
\in \mathbb{Z}} \in Q_{j}\right\}}\right)\mathds{1}_{\{\Theta_0\neq 
0\}}\|\Theta_{k-j}\|^\alpha\right)\nu_\alpha(dy) \\
 &=& \sum_{j \in \mathbb{Z}}\int \int 
\mathds{1}_{A}\left(y\left(\frac{\theta_{t}}{\|\theta_{k-j}\|}\right)_{0 \leq t 
\leq 
n}\mathds{1}_{\left\{\left(\frac{\theta_{t+k-j}}{\|\theta_{k-j}\|}\right)_{t 
\in \mathbb{Z}} \in 
Q_{j}\right\}}\right)\|\theta_{k-j}\|^\alpha\nu_\alpha(dy)P^{(\Theta)_{t \in 
\Z}}(d\theta)
\end{eqnarray*}
where we used that $(\Theta_t)_{t \in \mathbb{Z}}$ satisfies property (TCF). 
Since $(\theta_t)_{t \in \mathbb{Z}} \in Q_j$ if and only if $(c\theta_t)_{t 
\in \mathbb{Z}} \in Q_j$ for any $c>0$, and by substituting $u$ for 
$y/\|\theta_{k-j}\|$ in each summand, this equals
\begin{eqnarray*}
 && \sum_{j \in \mathbb{Z}}\int \int \mathds{1}_{A}\left(u(\theta_{t})_{0 \leq 
t \leq n}\mathds{1}_{\left\{(\theta_{t+k-j})_{t \in \mathbb{Z}} \in 
Q_{j}\right\}}\right)\mathds{1}_{\{\theta_{k-j}\neq 
0\}}\nu_\alpha(du)P^{(\Theta)_{t \in \Z}}(d\theta) \\
 &=&  \sum_{j \in \mathbb{Z}}\int \int \mathds{1}_{A}(u(\theta_{t})_{0 \leq t 
\leq n})\mathds{1}_{\{\theta_{k-j}\neq 0, (\theta_{t+k-j})_{t \in \mathbb{Z}} 
\in Q_{j}\}}\nu_\alpha(du)P^{(\Theta)_{t \in \Z}}(d\theta), 
\end{eqnarray*}
since $A$ is bounded away from 0. Now, the events in the indicator functions 
above are equivalent to
\begin{align*}
&\theta_k \neq 0, & j&=0,\\
&\theta_{k+1} \neq 0, \theta_k=0, & j&=-1, \\
&\theta_{k-1} \neq 0, \theta_k=\theta_{k+1}=0, & j&=1, \\
&\theta_{k+2} \neq 0, \theta_{k-1}=\theta_k=\theta_{k+1}=0, & j&=-2 \\
&\vdots & 
\end{align*}
and so the sum of the indicator functions of these disjoint events is equal to 
$\mathds{1}_{\{(\theta_t)_{t \in \Z} \neq 0\}}$, which equals 1 almost surely 
due to $P(\|\Theta_0\|=1)=1$. Therefore, 
\begin{eqnarray*}
&& \sum_{j \in \mathbb{Z}}\int \int \mathds{1}_{A}\left(y(\theta_{t+j-k})_{0 
\leq t \leq n}\mathds{1}_{\{(\theta_t)_{t \in \mathbb{Z}} \in 
Q_{j}\}}\right)\nu_\alpha(dy)P^{(\Theta)_{t \in \Z}}(d\theta)\\
&=& \int \int \mathds{1}_{A}(u(\theta_{t})_{0 \leq t \leq 
n})\nu_\alpha(du)P^{(\Theta)_{t \in \Z}}(d\theta)\\
&=& \sum_{j \in \mathbb{Z}}\int \int \mathds{1}_{A}\left(u(\theta_{t+j})_{0 
\leq t \leq n}\mathds{1}_{\{(\theta_t)_{t \in \mathbb{Z}} \in 
Q_{j}\}}\right)\nu_\alpha(du)P^{(\Theta)_{t \in \Z}}(d\theta),
\end{eqnarray*}
where in the last equality we used that $(\theta_t)_{t \in \Z} \in Q_j$ implies 
that $\theta_j=0$ for all $j \neq 0$ and therefore $u(\theta_{t+j})_{0 
	\leq t \leq n}\mathds{1}_{\{(\theta_t)_{t \in \mathbb{Z}} \in 
	Q_{j}\}} \notin A$ for $j\neq 0$. This shows \eqref{Eq:equationintensity} and thereby the 
stationarity of $(Z_t)_{t \in \N}$.  

For any $x=(x_0, \ldots, x_n) \in ([0,\infty)^d)^{n+1}$ and $k \in \N$ we have 
furthermore (by substituting $u$ for $k^{1/\alpha}y$) that
\begin{eqnarray*}
&& P(Z_0 \leq k^{-1/\alpha}x_0, \ldots, Z_n \leq k^{-1/\alpha}x_n) \\
&=& \exp\left(-\sum_{j \in \Z}\int \int 
\mathds{1}_{[0,k^{-1/\alpha}x]^c}\left(y(\theta_{t+j})_{0 \leq t \leq 
n}\mathds{1}_{\{(\theta_t)_{t \in \mathbb{Z}} \in 
Q_{j}\}}\right)\nu_\alpha(dy)P^{(\Theta)_{t \in \Z}}(d\theta)\right)\\
&=& \exp\left(-k\sum_{j \in \Z}\int \int 
\mathds{1}_{[0,x]^c}\left(u(\theta_{t+j})_{0 \leq t \leq 
n}\mathds{1}_{\{(\theta_t)_{t \in \mathbb{Z}} \in 
Q_{j}\}}\right)\nu_\alpha(du)P^{(\Theta)_{t \in \Z}}(d\theta)\right)\\
&=&(P(Z_0 \leq x_1, \dots, Z_n \leq x_n))^k,
\end{eqnarray*}
so the process satisfies Definition \ref{def:maxstable} and is thereby 
max-stable. In order to show that the process is also regularly varying, choose 
$n \in \N$ and $x=(x_{-n}, \ldots, x_n) \in ([0,\infty)^d)^{2n+1}, x \neq 0$. Then, for $z>0$, and 
again with a substitution we have
\begin{eqnarray*}
 && z^{\alpha}P((Z_{-n}/z, \ldots, Z_n/z) \in [0,x]^c) \\
 &=& z^{\alpha}\left(1-\exp\left(-\sum_{j \in \Z}\int \int 
\mathds{1}_{[0,zx]^c}\left(y(\theta_{t+j})_{-n \leq t \leq 
n}\mathds{1}_{\{(\theta_t)_{t \in \mathbb{Z}} \in 
Q_{j}\}}\right)\nu_\alpha(dy)P^{(\Theta)_{t \in \Z}}(d\theta) \right)\right)\\
 &=& z^{\alpha}\left(1-\exp\left(-z^{-\alpha}\sum_{j \in \Z}\int \int 
\mathds{1}_{[0,x]^c}\left(y(\theta_{t+j})_{-n \leq t \leq 
n}\mathds{1}_{\{(\theta_t)_{t \in \mathbb{Z}} \in 
Q_{j}\}}\right)\nu_\alpha(dy)P^{(\Theta)_{t \in \Z}}(d\theta) \right)\right)\\
 &\to& \sum_{j \in \Z}\int \int \mathds{1}_{[0,x]^c}\left(y(\theta_{t+j})_{-n 
\leq t \leq n}\mathds{1}_{\{(\theta_t)_{t \in \mathbb{Z}} \in 
Q_{j}\}}\right)\nu_\alpha(dy)P^{(\Theta)_{t \in \N}}(d\theta), \;\;\; z \to 
\infty,
\end{eqnarray*}
which shows that $(Z_{-n}, \ldots, Z_n)$ is regularly varying with limit measure
$$ \mu(A)=\sum_{j \in \Z}\int \int \mathds{1}_{A}\left(y(\theta_{t+j})_{-n \leq t 
\leq n}\mathds{1}_{\{(\theta_t)_{t \in \mathbb{Z}} \in 
Q_{j}\}}\right)\nu_\alpha(dy)P^{(\Theta)_{t \in \Z}}(d\theta)$$
for sets $A$ bounded away from $(0, \ldots, 0)$. Note that, again due to the definition of the sets $Q_j, j \neq 0$,
\begin{eqnarray*} &&\mu(A \cap \{(x_{-n}, \ldots, x_n):\|x_0\|>1\})\\
&=& \sum_{j \in \Z} \int \int \mathds{1}_{A \cap \{(x_{-n}, \ldots, 
x_n):\|x_0\|>1\}}\left(y(\theta_{t+j})_{-n \leq t \leq 
n}\mathds{1}_{\{(\theta_t)_{t \in \mathbb{Z}} \in 
Q_{j}\}}\right)\nu_\alpha(dy)P^{(\Theta)_{t \in \Z}}(d\theta)\\
&=& \int \int_1^\infty \mathds{1}_A\left(y(\theta_{t})_{-n \leq t \leq 
n}\right)\nu_\alpha(dy)P^{(\Theta)_{t \in \Z}}(d\theta)
\end{eqnarray*}
and
$$ \mu(\{(x_{-n}, \ldots, x_n):\|x_0\|>1\})=\int \int_1^\infty 
\mathds{1}_{([0,\infty)^d)^{2n+1}}\left(y(\theta_{t})_{-n \leq t \leq n}\right) 
\nu_\alpha(dy)P^{(\Theta)_{t \in \Z}}(d\theta)=1,$$
such that for Borel sets $A$ with $\mu(\partial(A \cap \{(x_{-n}, \ldots, 
x_n):\|x_0\|>1\}))=0$,
\begin{eqnarray*}
 && P((Z_{-n}/z, \ldots, Z_n/z) \in A \mid \|Z_0\|>z) \\
 &\to& \frac{\mu(A \cap \{(x_{-n}, \ldots, x_n):\|x_0\|>1\})}{\mu(\{(x_{-n}, \ldots, 
x_n):\|x_0\|>1\})}, \;\;\; z \to \infty,\\
 &=& \int \int_1^\infty \mathds{1}_A\left(y(\theta_{t})_{-n \leq t \leq 
n}\right)\nu_\alpha(dy)P^{(\Theta)_{t \in \Z}}(d\theta)\\
&=& P((Y\cdot \Theta_{-n}, \ldots, Y \cdot \Theta_n) \in A),
\end{eqnarray*}
for a Pareto$(\alpha)$ distributed random variable $Y$ independent of 
$(\Theta_t)_{t \in \N_0}$. By Remark \ref{rem:tailproc}, this proves the 
statement.
\end{proof}

An analogue to Proposition \ref{Prop:max-stable} holds in this case as well, 
i.e.\ the process $(Z_t)_{t \in \N_0}$ from Theorem \ref{the:general} is the 
maximum attractor of the process which underlies $(\Theta_t)_{t \in \Z}$. The 
proof of this result is completely along the lines of the proof of Proposition 
\ref{Prop:max-stable}. 

\section*{Acknowledgements}
The author thanks two anonymous referees for constructive comments on an earlier version of this manuscript.

\end{document}